\documentclass{amsart}
\usepackage{tikz}
\usepackage{xcolor}
\usepackage{amssymb,latexsym,amsmath,extarrows}
\usepackage{graphicx,mathrsfs,comment}
\usepackage{hyperref,url}

\usepackage{amstext}
\usepackage{bbm}

\numberwithin{equation}{section}

\DeclareMathOperator{\mes}{mes}

\newtheorem{theorem}{Theorem}[section]
\newtheorem{lemma}[theorem]{Lemma}

\newtheorem{proposition}[theorem]{Proposition}

\theoremstyle{remark}

\newcommand{\cI}{\mathcal I}

\newcommand{\dint}{\displaystyle\int}

\newcommand{\bh}{{\boldsymbol h}}
\newcommand{\bx}{{\bf x}}

\newcommand{\bz}{{\bf z}}

\newcommand{\fm}{{\mathfrak m}}
\newcommand{\fM}{{\mathfrak M}}

\newcommand{\cM}{{\mathcal M}}

\newcommand{\R}{\mathbb{R}}
\newcommand{\Z}{\mathbb{Z}}

\newcommand{\1}{\mathbbm{1}}

\newcommand{\fB}{\mathfrak{B}}
\newcommand{\balp}{\boldsymbol{\alpha}}
\newcommand{\bbet}{\boldsymbol{\beta}}

\begin{document}

\title[]{On the density of rational lines on diagonal cubic hypersurfaces, II}

\author{Scott T. Parsell} \address{ Scott T. Parsell\\  
Department of Mathematics, West Chester University, United States}\email{sparsell@wcupa.edu}

\author{Kiseok Yeon} \address{ Kiseok Yeon\\
Department of Mathematics, University of California, Davis, United States}\email{kyeon@ucdavis.edu}

\begin{abstract}

In this paper, we establish the expected asymptotic formula for the number of rational lines on a diagonal cubic hypersurface in 18 or more variables, improving on recent work of the second author. This is achieved via a refined mean value estimate for minor arcs that non-trivially exploits a shifting variables argument in both underlying dimensions. 


\end{abstract}

\maketitle


\section{Introduction}

The problem of establishing the existence of linear spaces on algebraic varieties has a long history, beginning with work of Brauer \cite{MR13127} and Birch \cite{MR97359}. The inductive nature of these arguments, however, does not immediately yield quantitative bounds on the number of variables required, and in fact the first general explicit bounds appeared many years later in work of Wooley \cite{MR1623369}. Subsequent work of Brandes \cite{MR3198749} establishes asymptotic formulae for the number of linear spaces on a variety defined by a system of homogeneous polynomial equations of the same degree, and Brandes \cite{MR4259461} further specializes to the case of rational lines on the hypersurface defined by a single equation.

The case in which the underlying variety is defined by a system of cubic forms has received significant attention, dating to work of Lewis and Schulze-Pillot \cite{MR0755988}. Refinements due to Wooley \cite{MR1458715} show for instance that the hypersurface defined by a cubic form in at least 37 variables contains a rational line. Brandes and Dietmann \cite{MR4268805} later showed that 31 variables suffice in the non-singular case, and the non-singularity hypothesis is removed in very recent work of Brandes, Dietmann, and Leep \cite{key2026}. 

When attempting to harness the full power of the circle method, one often restricts attention to the case of diagonal forms. Suppose that $k \ge 3$, and consider the hypersurface defined by the additive equation
\begin{equation}\label{hypersurface}
    \sum_{i=1}^sc_iz^k_i=0,
\end{equation}
where $c_i\ (1\leq i\leq s)$ are non-zero integers. Explicit results concerning linear spaces of arbitrary dimension satisfying (\ref{hypersurface}) have origins in work of Arkhipov, Karatsuba, and Chubarikov \cite{MR608411}, with later refinements occurring in \cite{MR2149529}, \cite{MR2485413}, and \cite{MR3132907}. The special case of lines on a diagonal hypersurface of degree $k$ is examined  in \cite{MR1817503} and \cite{MR4744752}, for example, and a detailed study of the cubic case began with work of the first author \cite{MR1778504}. 

To describe further progress, we set up some additional notation.
Our goal is to count pairs of vectors $\boldsymbol{x}=(x_1,\ldots,x_s)\in \mathbb{Z}^s$ and $\boldsymbol{y}=(y_1,\ldots,y_s)\in \mathbb{Z}^s$ such that the line $l: \boldsymbol{x}+t\boldsymbol{y}$ is contained in the hypersurface defined by $(\ref{hypersurface}).$
It follows from the Binomial Theorem that such pairs $(\boldsymbol{x}, \boldsymbol{y})$ 
are precisely the solutions of the system
\begin{equation}\label{system}
     \sum_{i=1}^sc_ix_i^{k-j} y_i^j =0 \qquad (0 \le j \le k).
\end{equation}
We therefore let $N_{s}(X):=N_{s,k}(X;\boldsymbol{c})$ denote the number of solutions $(\boldsymbol{x},\boldsymbol{y})\in \mathbb{Z}^{2s}\cap [-X,X]^{2s}$ to $(\ref{system})$. 

By making clever use of the cubic case of Vinogradov's mean value theorem due to Wooley \cite{MR3479572}, Zhao \cite{MR3552298} established the expected asymptotic formula for $N_{s,3}(X)$ whenever $s \ge 21$, improving on the bound $s \ge 29$ due to the first author \cite{MR2965969}. 
Following the work of Bourgain, Demeter, and Guo \cite{MR3709122} on the two-dimensional cubic Parsell-Vinogradov system, one can now recover Zhao's result in a routine manner by completing the system (\ref{system}) 
to obtain the required mean value estimate. Very recently, the second author \cite{ky2026} was able to save two additional variables by devising a multi-dimensional version of Wooley's shifting variables argument (see \cite{MR2913181}) that leverages the bounds of \cite{MR3709122} to obtain a mean value estimate restricted to minor arcs. The surplus minor arc savings permits an interpolation with lower moments of the relevant exponential sum, thus reducing the total number of variables required. 


The purpose of the present paper is to refine the shifting variables argument of \cite{ky2026} to more fully exploit the two-dimensional nature of the problem. 
Our main achievement is a stronger mean value estimate that exploits non-trival summation over both integer shifts, subject to a suitable minor arc condition. It transpires that this allows us to save an extra variable, and we thus obtain the following result.

\begin{theorem}\label{cubic theorem}
    Whenever $s\geq 18$, one has
    \begin{equation*}\label{cubic AF}
        N_{s,3}(X)=\sigma X^{2s-12}+O(X^{2s-12-\tau}),
    \end{equation*}
    for some $\tau>0$, where $\sigma =\sigma_{\boldsymbol{c}}$ is a positive constant depending on the coeffcients $\boldsymbol{c}$.
\end{theorem}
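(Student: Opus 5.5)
We would prove Theorem \ref{cubic theorem} by the Hardy--Littlewood circle method applied to the system (\ref{system}) with $k=3$. This is a system of four equations of degrees $3$ in the variables $(\boldsymbol{x},\boldsymbol{y})$, so the expected main term is $X^{2s-12}$. Write
\[
f_i(\balp;X)=\sum_{|x|,|y|\le X} e\bigl(c_i(\alpha_0x^3+\alpha_1x^2y+\alpha_2xy^2+\alpha_3y^3)\bigr),
\]
so that $N_{s,3}(X)=\int_{[0,1)^4}\prod_{i=1}^s f_i(\balp;X)\,d\balp$. We dissect $[0,1)^4$ into major arcs $\fM$, where each $\alpha_j$ is close to $a_j/q$ with $q\le X^{\delta}$ and $|q\alpha_j-a_j|\le X^{\delta-3}$, and minor arcs $\fm$.

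On $\fM$ we use the standard approximation of $f_i$ by $q^{-2}S_i(q,\mathbf a)\,v_i(\bbet)$, where $S_i$ is a two-dimensional complete cubic sum and $v_i$ is the corresponding singular integral. This gives the main term $\sigma X^{2s-12}$ with $\sigma=\mathfrak S J$. We need the following inputs for this step.
\begin{itemize}
\item Absolute convergence of the singular series and singular integral, which requires $s$ well below $18$ using bounds $S_i\ll q^{2-1/3+\epsilon}$ and $v_i\ll(1+|\bbet|)^{-1/3}$ in each direction.
\item Positivity of $\sigma$. This follows from nonsingular real and $p$-adic solutions of (\ref{system}), produced by taking lines inside a diagonal cubic in enough variables (Hensel lifting). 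This is routine once $s\ge 18$.
\end{itemize}

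The heart of the matter is the minor arc bound
\[
\int_{\fm}\prod|f_i|\,d\balp\ll X^{2s-12-\tau}.
\]
By Hölder it suffices to bound $\int_{\fm}|f|^{18}$ for a single $f=f_i$. We would interpolate between two estimates:
\begin{itemize}
\item a complete mean value $\int_{[0,1)^4}|f|^{2t}\ll X^{4t-12+\epsilon}$ for smaller $t$, obtained by completing the system to the two-dimensional cubic Parsell--Vinogradov system and invoking Bourgain--Demeter--Guo;
\item a minor arc estimate with a power saving beyond the critical exponent.
\end{itemize}

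To get the minor arc saving we would refine the shifting-variables argument of \cite{ky2026}. Replace $(x,y)$ by $(x+h_1,y+h_2)$ and average over shifts $(h_1,h_2)\in[1,H]^2$. Expanding the binomials transforms the system into one whose lower-degree coefficients carry the shifts. We then apply Cauchy--Schwarz so that the sum over both $h_1$ and $h_2$ is taken inside, and bound the resulting congruence-counting problem. The minor arc hypothesis, that some $\alpha_j$ is poorly approximable, is what permits nontrivial cancellation in the sum over $(h_1,h_2)$ through Weyl-type bounds for the linear or quadratic terms in $\bh$. The aim is an estimate of the shape
\[
\int_{\fm}|f|^{2u}\ll X^{4u-12-\eta}
\]
for a suitable $u$ a bit above the Parsell--Vinogradov critical point, with $\eta$ larger than in \cite{ky2026} because both shift dimensions contribute.

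With that estimate in hand, Hölder between $\sup_{\fm}|f|$ (Weyl bound), the minor arc moment and the complete moment should yield the saving at $2s=36$.

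The main obstacle is this two-dimensional shifting step: extracting genuine cancellation from both $h_1$ and $h_2$ simultaneously while keeping the diagonal contribution under control. We would first try to treat $(h_1,h_2)$ as a single two-dimensional variable and estimate the resulting bilinear form by a large sieve or divisor-type count. We would then optimize $H=X^{\theta}$ so that the exponents balance exactly at $s=18$.
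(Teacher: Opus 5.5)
Your major-arc treatment and the reduction to $\int_{\fm}|f|^{18}\ll X^{24-\tau}$ are fine. The gap is in the minor-arc plan, which cannot reach that moment.

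\textbf{Why the proposed tools cannot reach the 18th moment.} The exponent $18$ lies \emph{below} the critical exponent $20$ of the completed two-dimensional Parsell--Vinogradov system, and every tool you list operates at or above that exponent.
Completion plus Bourgain--Demeter--Guo gives $\int_{[0,1)^4}|f|^{2t}\,d\balp\ll X^{8}J_t(X)$. This equals $X^{4t-12+\varepsilon}$ only when $2t\ge 20$. At $2t=18$ it gives $X^{26+\varepsilon}$. For $2t<12$ the bound $X^{4t-12}$ is false outright, since the diagonal solutions alone contribute $\gg X^{2t}$.
A pointwise Weyl bound for $\sup_{\fm}|f|$ only lets you climb from a known moment to a higher one. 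H\"older against the trivial measure of $\fm$ turns the 20th moment into only $X^{126/5+\varepsilon}$ at the 18th.
So a minor-arc estimate at $2u$ slightly above $20$, combined with Weyl and the complete moment, can at best handle $s\ge 20$. Your closing remark about ``the saving at $2s=36$'' conflates the number of variables with the moment; a 36th moment is routine.

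\textbf{The missing idea.} You need to interpolate \emph{downward} against a low moment that carries measure-theoretic information. The paper uses $\int_0^1\int_{\mathfrak D}\int_0^1|F|^8\,d\balp\ll X^{10+\varepsilon}\mes(\mathfrak D)$ for $\mathfrak D\subseteq[0,1)^2$ in the $(\alpha_2,\alpha_3)$ coordinates (Lemma~\ref{k-1 dim measure}). This is organized by a dyadic pruning $\fM(H)^4\setminus\fM(H/2)^4=\bigcup_{l=1}^4\mathfrak P_l(H)$ with $X^{\delta/100}\le H\le X^{3/2}$.

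On $\mathfrak P_l(H)$ some $\alpha_l\in\fm(H/2)$, so the shifting-variables bound gives the 20th moment with saving $H^{-1}+X^{-1}$. Meanwhile $(\alpha_2,\alpha_3)\in\fM(H)^2$, a set of measure $\ll H^4X^{-6}$. H\"older with $18=\tfrac56\cdot 20+\tfrac16\cdot 8$ then yields $X^{24+\varepsilon}(H^{-1}+X^{-1})^{5/6}H^{2/3}$.

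This saves only for $H\le X^{5/4-\varepsilon}$, because the shifting saving is capped at $X^{-1}$. That range is exactly where the genuinely two-dimensional refinement (Lemma~\ref{improved minor}) enters. For $l=1$, when $\alpha_2\in\fM(H)$ has minimal denominator $q$, one writes $h_2=q\lfloor X^2/H\rfloor r_1+qr_2+r_3$. Partial summation then removes the $\alpha_2$-phase from the $r_2$-sum. Summing over $z_2$ saves a further $L^{-1}$ unless $q\alpha_3$ lies in major arcs of level $L$.

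The exceptional pairs $(\alpha_2,\alpha_3)$ form a set of measure $\ll L^2H^3X^{-6}$ (Proposition~\ref{pro5.2}). That set is fed back into the 8th-moment bound, paired with the basic 20th-moment saving. Choosing $L=H^{1/7}$ balances both pieces at $H^{-2/7}+X^{-1/84}$.

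Your suggestion to treat $(h_1,h_2)$ as a bilinear form and apply a large sieve identifies neither this denominator-splitting mechanism nor the pruning that makes such a gain usable.
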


As indicated above, this sharpens \cite[Theorem 1.1]{ky2026}, where the same conclusion was obtained for $s \ge 19$. Here $\sigma$ represents a product of local densities, which is positive for $s \ge 14$ as a consequence of \cite[Lemma 5.1]{MR1778504}.





We mention that a straightforward generalization of our methods, combined with the results of Guo and Zhang \cite{MR3994585} on two-dimensional Parsell-Vinogradov systems, produces analogous results for larger $k$. In this instance, we would save roughly $k/3$ variables over what follows directly from \cite{MR3994585}, in which the total number of variables required to obtain the expected asymptotic formula for $N_{s,k}(X)$ is of order $k^3$. Since the savings is not especially compelling and does not illuminate any new ideas, we choose not to provide details in this direction.

Throughout we use Vinogradov's notation, where $f\ll g$ means that $f(x)\leq Cg(x)$ for some sufficiently large constant $C>0.$ We may use $f=O(g)$ with the same meaning. We also make liberal use of vector notation, writing $a \le |\bx| \le b$ to mean that each component of $\bx$ lies in $[a,b]$. Finally, we use $\varepsilon$ to denote a small positive number, whose value may change from statement to statement.

 \section{Mean value estimates on minor arcs}\label{sec2}
 
 Our main object of interest is the exponential sum
\begin{equation}\label{deg k sum}    F(\boldsymbol{\alpha}):=\sum_{1\leq x,y\leq X}e(\alpha_1x^3+\alpha_2x^{2}y+\alpha_3xy^{2}+\alpha_{4}y^3),
\end{equation}
with $\boldsymbol{\alpha}=(\alpha_1,\alpha_2,\alpha_3,\alpha_{4}).$ 
For $s\in \mathbb{N}$ we further define
$J_s(X)=J_{s,2,3}(X)$ to be the number of integer solutions of the system
\begin{equation*}
\sum_{i=1}^s x_i^{d-j}y_i^{j} = \sum_{i=s+1}^{2s} x_i^{d-j} y_i^{j} \qquad (0 \le j \le d, \  1 \le d \le 3)
\end{equation*}
satisfying $1\le \boldsymbol{x},\boldsymbol{y} \le X$. 
In this section we aim to estimate restricted mean values of the exponential sum $(\ref{deg k sum})$ by relating them to $J_s(X)$, for which sharp bounds are available  
due to Bourgain, Demeter, and Guo \cite{MR3709122}. 
The particular estimate we need here, which follows from \cite[Theorem 1.5]{MR3709122}, is
\begin{equation}\label{J10}
J_{10}(X) \ll X^{20+\varepsilon}.
\end{equation}
In the shifting variables argument of \cite{ky2026} one replaces $x$ by $x-z_1$ and $y$ by $y-z_2$ and averages over $z_1$ and $z_2$. In analyzing the effects of these shifts on the cubic equations, it transpires that the linear forms
\begin{equation}\label{def L_1L_2}
\begin{aligned}
 L_1(\boldsymbol{h},\boldsymbol{\alpha})&:= 
 3h_1\alpha_1+2h_2\alpha_2+h_3\alpha_3 \\
  L_2(\boldsymbol{h},\boldsymbol{\alpha})&:=
  h_1\alpha_2+2h_2\alpha_3+3h_3\alpha_4
\end{aligned}
\end{equation}
play a prominent role, as they arise naturally from the partial translation invariance inherent in a Vinogradov-type system with only the quadratic equations inhomogeneous. It is therefore useful to define the corresponding exponential sum 
\begin{equation}\label{T def}
\begin{aligned}
T(\boldsymbol{\alpha},\boldsymbol{\beta}):=\sum_{1\leq z_1,z_2\leq X}\biggl|\sum_{|\bh|\leq sX^{2}}e(-z_1L_1(\bh,\boldsymbol{\alpha})-z_2L_2(\bh,\boldsymbol{\alpha})-\boldsymbol{\beta}\cdot \bh)\biggr|,
\end{aligned}
\end{equation} 
where $\balp \in [0,1)^4$ and $\bbet \in [0,1)^3$
From the argument of \cite[Lemma 3.1]{ky2026}, we extract the following fundamental lemma, which relates mean values of $F(\boldsymbol{\alpha})$ to pointwise estimates for $T(\boldsymbol{\alpha},\boldsymbol{\beta})$.

\begin{lemma}\label{reduction to T}
Let $s$ be a natural number, and let $\fB$ denote any measurable subset of $[0,1)^{4}$. Then one has
\begin{equation*}
\int_{\fB}|F(\boldsymbol{\alpha})|^{2s}\,d\boldsymbol{\alpha} \ll (\log X)^{4s} \, J_s(2X) \sup_{\substack{\boldsymbol{\alpha}\in \fB\\ \boldsymbol{\beta}\in [0,1)^3 }} |T(\boldsymbol{\alpha},\boldsymbol{\beta})|.
\end{equation*}
\end{lemma}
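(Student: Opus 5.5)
The plan is to run the two-dimensional shifting variables argument of \cite[Lemma 3.1]{ky2026}, keeping careful track of how the shifts enter only through the quadratic coefficients. Write $\varphi(x,y;\balp)=\alpha_1x^3+\alpha_2x^2y+\alpha_3xy^2+\alpha_4y^3$.

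\emph{Shifting and completing.} For each pair $1\le z_1,z_2\le X$ we have
\[
F(\balp)=\sum_{z_1<x\le X+z_1}\ \sum_{z_2<y\le X+z_2}e(\varphi(x-z_1,y-z_2;\balp)).
\]
The interval conditions will be removed by the standard completion: we introduce $\gamma_1,\gamma_2\in[0,1)$ and sum $x,y$ over $[1,2X]$. This costs a kernel whose $L^1$-norm is $\ll(\log X)^2$. Applying H\"older to the $\gamma$-integral for the $2s$-th power produces the factor $(\log X)^{4s}$. Averaging over $z$, we then obtain
\[
|F(\balp)|^{2s}\ll (\log X)^{4s}X^{-2}\sum_{\bz}\int_{[0,1)^2}\Bigl|\sum_{1\le x,y\le 2X}e\bigl(\varphi(x-z_1,y-z_2;\balp)+\gamma_1x+\gamma_2y\bigr)\Bigr|^{2s}w(\boldsymbol\gamma)\,d\boldsymbol\gamma,
\]
with $w\ge 0$ of total mass $\ll(\log X)^2$. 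The factor $X^{-2}$ can simply be discarded at the end.

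\emph{Expanding the cubic form.} Expanding $\varphi(x-z_1,y-z_2;\balp)$ leaves the cubic part unchanged. The quadratic monomials $x^2,xy,y^2$ acquire coefficients $-(3z_1\alpha_1+z_2\alpha_2)$, $-(2z_1\alpha_2+2z_2\alpha_3)$ and $-(z_1\alpha_3+3z_2\alpha_4)$. The linear terms pick up polynomials in $\bz,\balp$, and the constant term disappears inside the absolute value.

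Now expand the $2s$-th power and group the tuples $(\bx,\by)$ according to their cubic, quadratic and linear power-sum differences $(\boldsymbol c,\bh,\boldsymbol l)$, where $|\bh|\le sX^2$ up to harmless constants. By the identity
\[
(3z_1\alpha_1+z_2\alpha_2)h_1+(2z_1\alpha_2+2z_2\alpha_3)h_2+(z_1\alpha_3+3z_2\alpha_4)h_3=z_1L_1(\bh,\balp)+z_2L_2(\bh,\balp),
\]
the $\bz$-dependence in the quadratic slot is exactly $e(-z_1L_1-z_2L_2)$. The $\bz$-dependence in the linear slot can be absorbed by translating $\boldsymbol\gamma$, since the weight only needs to be handled in $L^1$. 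To decouple $\bh$, I write the count with prescribed $\bh$ via orthogonality as an integral over an auxiliary $\bbet\in[0,1)^3$ attached to the quadratic monomials. This gives
\[
\sum_{\bz}|G_{\bz}|^{2s}=\int_{[0,1)^3}|G(\balp,\bbet,\boldsymbol\gamma)|^{2s}\sum_{\bz}\sum_{|\bh|\le sX^2}e\bigl(-z_1L_1(\bh,\balp)-z_2L_2(\bh,\balp)-\bbet\cdot\bh\bigr)\,d\bbet ,
\]
up to sign conventions. Here $G(\balp,\bbet,\boldsymbol\gamma)$ is the full exponential sum over $1\le x,y\le 2X$ with cubic coefficients $\balp$, quadratic coefficients $\bbet$ and linear coefficients $\boldsymbol\gamma$, and it no longer depends on $\bz$.

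\emph{Conclusion.} Taking absolute values inside gives the pointwise bound
\[
\sum_{\bz}|G_{\bz}|^{2s}\le \sup_{\bbet}T(\balp,\bbet)\int_{[0,1)^3}|G(\balp,\bbet,\boldsymbol\gamma)|^{2s}\,d\bbet .
\]
We then integrate over $\balp\in\fB\subseteq[0,1)^4$ and over $\boldsymbol\gamma$, extending the $\balp$-integral to all of $[0,1)^4$ once the supremum of $T$ over $\fB\times[0,1)^3$ has been extracted. Because the weight $w$ is not uniform, I apply H\"older carefully, or use the translation-invariance of the torus integral of $|G|^{2s}$ in $\boldsymbol\gamma$. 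Either way, the remaining $9$-dimensional integral of $|G|^{2s}$ over the full torus counts solutions of the complete system of degrees $1,2,3$, which is exactly $J_s(2X)$.

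\emph{Main obstacle.} I expect the bookkeeping in the decoupling step to be the delicate part. One must check that the $\bz$-dependent shifts in the linear coefficients really are absorbed by the $\boldsymbol\gamma$-average without losing more than $(\log X)^{4s}$. One must also check that the Fourier inversion in $\bh$ places the $\bz$-dependence entirely into the factor controlled by $T$. If the linear shifts resist direct absorption, I would instead introduce a second orthogonality variable for the linear differences $\boldsymbol l$, exactly parallel to $\bbet$. I would then check that the resulting phase factor depends only on $\boldsymbol l$ and can be bounded trivially by $1$ before the torus integral is taken.
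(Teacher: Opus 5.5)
\textbf{Gap: the linear shifts cost a factor $X^2$.} Your overall architecture is the right one: shift, complete, H\"older, average over $\bz$, orthogonality in the quadratic coefficients $\bbet$, extract $\sup T$, and integrate to reach $J_s(2X)$. The handling of the linear terms, however, is wrong, exactly where you flagged the bookkeeping as delicate.

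After the shift one has $G_{\bz}(\balp,\boldsymbol{\gamma})=e(c_{\bz})\,G(\balp,-\boldsymbol{\mu}_{\bz},\boldsymbol{\gamma}+\boldsymbol{\lambda}_{\bz})$, where
$\boldsymbol{\mu}_{\bz}=(3z_1\alpha_1+z_2\alpha_2,\,2z_1\alpha_2+2z_2\alpha_3,\,z_1\alpha_3+3z_2\alpha_4)$ and
$\boldsymbol{\lambda}_{\bz}=(3z_1^2\alpha_1+2z_1z_2\alpha_2+z_2^2\alpha_3,\,z_1^2\alpha_2+2z_1z_2\alpha_3+3z_2^2\alpha_4)$.
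So your displayed identity, with a $\bz$-independent $G(\balp,\bbet,\boldsymbol{\gamma})$ on the right, is false. Removing $\boldsymbol{\lambda}_{\bz}$ by translating $\boldsymbol{\gamma}$ must be done separately for each $\bz$. It turns the weight into $w(\boldsymbol{\gamma}-\boldsymbol{\lambda}_{\bz})$, which depends on $\bz$ and $\balp$. Since $w$ is far from constant, neither H\"older nor translation invariance of the unweighted torus integral removes it, and the only uniform control is $\|w\|_\infty\le X^2$.

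Your fallback has the same defect. Bounding the phase $e((\boldsymbol{\gamma}+\boldsymbol{\lambda}_{\bz})\cdot\boldsymbol{l})$ by $1$ costs the number of linear differences $\boldsymbol{l}$, which is $\asymp X^2$.

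This loss cannot be avoided. Suppose your chain were valid with the factor $X^{-2}$ retained, and take $\fB=[0,1)^4$ and $s=10$. The trivial bound $T\ll X^8$ together with (\ref{J10}) would give $\int|F|^{20}\,d\balp\ll X^{26+\varepsilon}$. But Cauchy--Schwarz gives $\int|F|^{20}\,d\balp\gg X^{28}$, since there are $\asymp X^{20}$ tuples and only $\ll X^{12}$ possible values of the four cubic sums.

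\textbf{Repair.} Keep the factor $X^{-2}$ that you proposed to discard; it is exactly what pays for the linear shifts. For each $\bz$, substitute $\boldsymbol{\gamma}\mapsto\boldsymbol{\gamma}-\boldsymbol{\lambda}_{\bz}$ and use $w\le X^2$ to obtain
\[
|F(\balp)|^{2s}\ll(\log X)^{4s}\int_{[0,1)^2}\sum_{\bz}\bigl|G(\balp,-\boldsymbol{\mu}_{\bz},\boldsymbol{\gamma})\bigr|^{2s}\,d\boldsymbol{\gamma}.
\]
Now the only $\bz$-dependence sits in the quadratic slot, and $\boldsymbol{\mu}_{\bz}\cdot\bh=z_1L_1(\bh,\balp)+z_2L_2(\bh,\balp)$. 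Your $\bbet$-orthogonality step then goes through verbatim, giving $\sum_{\bz}|G(\balp,-\boldsymbol{\mu}_{\bz},\boldsymbol{\gamma})|^{2s}\le\sup_{\bbet}T(\balp,\bbet)\int_{[0,1)^3}|G(\balp,\bbet,\boldsymbol{\gamma})|^{2s}\,d\bbet$. Integrating over $\balp\in\fB$ and $\boldsymbol{\gamma}$ yields the lemma.

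Equivalently, in your fallback the $\ll X^2$ values of $\boldsymbol{l}$ cancel the $X^{-2}$, and $\|w\|_1\ll(\log X)^2$ supplies the logarithms.

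For comparison, the paper writes out none of this. It only observes that the argument of \cite[Lemma 3.1]{ky2026} leading to (3.21) never uses the shape of $\cM_l(H)$, so that set may be replaced by an arbitrary measurable $\fB$.
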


\begin{proof}
On inspecting the proof of \cite[Lemma 3.1]{ky2026}, one finds that the argument leading to the inequality (3.21) is unchanged if the set ${\mathcal M}_l(H)$ is replaced by an arbitrary measurable set ${\mathfrak B} \subseteq [0,1)^4$. Hence the statement of the lemma follows immediately upon inserting the bound (3.22) from \cite{ky2026} into (3.21).
\end{proof}


We now seek mean value estimates on various sets of minor arcs, and Lemma \ref{reduction to T} reduces the problem to obtaining suitable estimates for $T(\boldsymbol{\alpha}, \boldsymbol{\beta})$. 
We start by defining a set of one-dimensional major arcs
\begin{equation*}
\fM(Q) = \bigcup_{\substack{1 \le a \le q \le Q \\ (q,a)=1}} \fM_{q,a}(Q),    
\end{equation*}
where
\begin{equation*}
\fM_{q,a}(Q) = \{\alpha \in [0,1): |q\alpha - a| \le Q X^{-3}\}.
\end{equation*}
Further define $\fm(Q) = [0,1) \setminus \fM(Q)$, and for $l = 1,2,3,4$ write
\begin{equation*}
\cM_l(Q) = \{\boldsymbol{\alpha} \in [0,1)^{4} : \alpha_l \in \fm(Q)\}.
\end{equation*}
By inserting the bound for $T$ obtained in (3.27) of \cite{ky2026}, that is for $1\leq l\leq 4$ one has
\begin{equation}\label{basic exponential sum}
     \sup_{\substack{\boldsymbol{\alpha}\in \mathcal{M}_l(Q)\\ \boldsymbol{\beta}\in [0,1)^3 }}|T(\boldsymbol{\alpha},\boldsymbol{\beta})|\ll X^{8+\varepsilon}(Q^{-1}+X^{-1}),
\end{equation}
into Lemma \ref{reduction to T}, we recover the estimate of \cite[Lemma 3.1]{ky2026}.

\begin{lemma}\label{basic minor}
    Let $Q$ and $X$ be positive numbers with $Q \le X^{3/2}$, and suppose that $s$ is a natural number. Then for $1 \le l \le 4$ one has
    \begin{equation*}\label{meanvalue lemma 2.2}
        \int_{\cM_l(Q)} |F(\boldsymbol{\alpha})|^{2s}\, d\boldsymbol{\alpha} \ll X^{8} \cdot J_s(2X) \cdot (Q^{-1} + X^{-1})(\log X)^{4s+1}. 
     \end{equation*}
\end{lemma}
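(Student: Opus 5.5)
The plan is to combine Lemma~\ref{reduction to T} with the pointwise bound (\ref{basic exponential sum}). Applying Lemma~\ref{reduction to T} with $\fB=\cM_l(Q)$ gives
\begin{equation*}
\int_{\cM_l(Q)}|F(\balp)|^{2s}\,d\balp \ll (\log X)^{4s}\,J_s(2X)\sup_{\substack{\balp\in\cM_l(Q)\\ \bbet\in[0,1)^3}}|T(\balp,\bbet)|.
\end{equation*}
Inserting (\ref{basic exponential sum}) then yields the claimed bound, except that a factor $X^{\varepsilon}$ appears where the lemma has a single extra logarithm. This suggests that the $X^\varepsilon$ in (\ref{basic exponential sum}) really comes from a divisor-type or logarithmic loss. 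So the substance of the proof is to verify (\ref{basic exponential sum}) in the sharper form
\begin{equation*}
\sup_{\balp\in\cM_l(Q),\,\bbet}|T(\balp,\bbet)|\ll X^8(Q^{-1}+X^{-1})\log X,
\end{equation*}
at least for $Q\le X^{3/2}$.

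To bound $T$, I would use that the inner sum over $\bh=(h_1,h_2,h_3)$ factors. Write $\gamma_j=z_1 c_j+z_2 d_j+\beta_j$, where $c_j$ and $d_j$ are the coefficients of $h_j$ in $L_1$ and $L_2$. For instance $\gamma_1=3z_1\alpha_1+z_2\alpha_2+\beta_1$. The inner sum is then $\prod_{j=1}^3\sum_{|h_j|\le sX^2}e(-\gamma_j h_j)$, and each factor is $\ll\min(X^2,\|\gamma_j\|^{-1})$. Hence
\begin{equation*}
T\ll\sum_{z_1,z_2\le X}\prod_{j=1}^{3}\min\bigl(X^2,\|\gamma_j\|^{-1}\bigr).
\end{equation*}

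The next step is to isolate the coefficient $\alpha_l$. Take $l=1$ for illustration. I bound the $j=2,3$ factors via sums over $z_2$ with $z_1$ fixed, or simply trivially, and keep $\min(X^2,\|3z_1\alpha_1+\text{shift}\|^{-1})$ summed over $z_1$. The difficulty is that the shift depends on $z_2$. I would first sum over $z_2$ using the factor involving $\alpha_4$ or $\alpha_3$. A cleaner route is to bound one factor by $X^2$ and the other two by a two-variable sum. The target is $X^8$ times $Q^{-1}+X^{-1}$. The trivial bound is $X^2\cdot X^6=X^8$, so a saving of $\min(Q,X)$ is needed from the single sum $\sum_{z\le X}\min(X^2,\|3z\alpha_l+\theta\|^{-1})$.

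The standard Vinogradov lemma handles that sum: if $|\alpha-a/q|\le q^{-2}$, then
\begin{equation*}
\sum_{z\le X}\min\bigl(X^2,\|z\alpha+\theta\|^{-1}\bigr)\ll\bigl(X^3q^{-1}+X^2+X\log q+q\log q\bigr)\log X,
\end{equation*}
uniformly in $\theta$. The main obstacle is the choice of $q$. For $\alpha_l\in\fm(Q)$, Dirichlet's theorem with parameter $X^3/Q$ gives $q\le X^3/Q$ and $|q\alpha_l-a|\le QX^{-3}$, and the minor arc condition then forces $q>Q$. Substituting gives
\begin{equation*}
X^2\bigl(Q^{-1}+X^{-1}+X^{-2}\log X+Q^{-1}\log X\bigr)\cdot X^3\log X.
\end{equation*}
Multiplying by $X^2$ for the remaining factor and by $X$ for the other $z$ gives $X^8(Q^{-1}+X^{-1})(\log X)^2$. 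The factor of $3$ in $3z\alpha_1$ is harmless, since it only rescales $q$ by a bounded amount. The hypothesis $Q\le X^{3/2}$ ensures that Dirichlet's parameter satisfies $X^3/Q\ge Q$, so the approximation is consistent. Collecting the logarithms then gives the stated power $(\log X)^{4s+1}$, up to adjusting one log factor.
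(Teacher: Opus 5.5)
Your strategy is the paper's. You apply Lemma~\ref{reduction to T} with $\fB=\cM_l(Q)$ and insert the pointwise bound for $T$ from \cite{ky2026}, with its $X^{\varepsilon}$ replaced by a logarithm. Your reconstruction of that bound is what lies behind it: factor the $\bh$-sum, bound two of the three factors by $X^2$, and apply the reciprocal-sum lemma in one shift variable uniformly in $\theta$. That uniformity means the dependence of the shift on the other variable is no obstacle. The argument handles every $l$, since $\alpha_1$ and $\alpha_2$ enter $\gamma_1$ attached to $z_1$ and $z_2$ respectively, and $\alpha_3,\alpha_4$ enter $\gamma_3$ likewise.

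The one step that does not check out is the power of $\log X$. As written, your argument gives $T\ll X^8(Q^{-1}+X^{-1})(\log X)^2$, hence $(\log X)^{4s+2}$ in the lemma, and ``adjusting one log factor'' is not a justification. The surplus logarithm is the overall factor $\log X$ you attached to the reciprocal-sum estimate. The standard form, which is the one the paper invokes (\cite[Lemma 3.2]{MR865981}), is
\[
\sum_{z\le N}\min\bigl(U,\|z\alpha+\theta\|^{-1}\bigr)\ll \Bigl(\frac{N}{q}+1\Bigr)\bigl(U+q\log q\bigr),
\]
with nothing further. Take $N=X$, $U=X^2$ and $Q<q\le X^3/Q$. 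This gives
\[
X^3Q^{-1}+X^2+X\log X+X^3Q^{-1}\log X\ll X^3(Q^{-1}+X^{-1})\log X .
\]
So the only logarithm is $\log q\ll\log X$ from the $q\log q$ term. This is exactly the paper's remark that the $X^{\varepsilon}$ at (3.24) of \cite{ky2026} may be replaced by $\log(2r)$. With this correction $T\ll X^8(Q^{-1}+X^{-1})\log X$, and Lemma~\ref{reduction to T} delivers the stated exponent $4s+1$.
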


\begin{proof}
This is essentially the statement of \cite[Lemma 3.1]{ky2026}. We note here that the factor of $X^{\varepsilon}$ occurring at (3.24) may in fact be replaced by $\log(2r) \ll \log X$, and the result follows on inserting the ensuing bound (3.27) into our Lemma \ref{reduction to T}.
\end{proof}

Our objective is now to establish a sharper bound for $T(\boldsymbol{\alpha},\boldsymbol{\beta})$ than that provided in Lemma \ref{basic minor} by employing a further dissection of the minor arcs 
 $\mathcal{M}_l(Q)$ into secondary major and minor arcs.
 Before proceeding, we offer a brief heuristic motivation for this refinement. 
 Recall the definitions (\ref{def L_1L_2}) and (\ref{T def}).
Suppose, hypothetically, that the term $2h_2\alpha_2$ were absent from $L_1(\boldsymbol{h},\boldsymbol{\alpha}),$ so that
\begin{equation*}
L_1(\boldsymbol{h},\boldsymbol{\alpha}) =3h_1 \alpha_1 + h_3\alpha_3.
\end{equation*}
Then, it follows by summing over $h_1$ and $h_2$ that the exponential sum $T(\boldsymbol{\alpha},\boldsymbol{\beta})$ would satisfy
\begin{equation*}
    T \ll X^2\sum_{1\leq z_1,z_2\leq X}\min\left\{X^2,\frac{1}{\|3\alpha_1z_1+\alpha_2z_2+\beta_1\|}\right\}\min\left\{X^2,\frac{1}{\|2\alpha_3z_2+\beta_2\|}\right\}.
\end{equation*}
Furthermore, if $\alpha_1$ and $\alpha_3$ belong to the minor arcs $\mathfrak{m}(Q)$, then by summing over $z_1$ and $z_2$ in succession and following the standard argument \cite[Lemma 3.2]{MR865981}, we would obtain
\begin{equation*}
T(\boldsymbol{\alpha},\boldsymbol{\beta})\ll X^{8}(Q^{-1}+X^{-1})^2(\log X)^2.
\end{equation*}
This represents a superior bound to (\ref{basic exponential sum}), provided that both $\alpha_1$ and $\alpha_3$ are in $\mathfrak{m}(Q).$
In the actual case where $2h_2\alpha_2$ is present, we can approximate this behavior when $\alpha_2$ lies in a suitable major arc. Let $q_2$ be a natural number such that $\|q_2\alpha_2\|$ is sufficiently small, and write $h_2=q_2l+r$ with $0\leq r\leq q_2-1$. When summing over $h_1$ and $l$, the term $2h_2\alpha_2=2(q_2l+r)\alpha_2$ remains nearly constant $(\text{mod}\ 1)$ because $\|q_2\alpha_2\|$ is small. This stability allows us to apply the aforementioned argument over the variables $h_1$
  and $l$ to achieve additional savings, provided $\boldsymbol{\alpha}$ lies on suitably defined secondary minor arcs.

In Lemma \ref{improved minor} below, we formalize this heuristic to provide a refined bound for $T(\boldsymbol{\alpha},\boldsymbol{\beta})$. Before stating the result, we define the relevant secondary major and minor arc dissections and introduce the necessary notation. 
Consider a fixed $H \le X^{3/2}$ and $L \le H^{1/2}$.
We define a modified set of major arcs by
\begin{equation*}
\mathfrak{M}(L;H):=\bigcup_{\substack{0\leq a\leq r\leq L\\ (r,a)=1}}\mathfrak{M}_{r,a}(L;H),
\end{equation*}
    where 
    \begin{equation*}
        \mathfrak{M}_{r,a}(L;H):=\{\beta\in [0,1):\ |r\beta-a|\leq LHX^{-3}\}.
    \end{equation*}
For each $\alpha \in \mathfrak{M}(H)$, let $q=q_{\alpha}$ denote the smallest natural number for which $\|q\alpha\|\leq HX^{-3}$, and define 
\begin{equation*}
    \mathfrak{L}^{(\alpha)}(L;H):=\left\{\beta\in [0,1):\ q\beta \ (\text{mod}\ 1) \in \mathfrak{M}(L;H)\right\}.
\end{equation*}
Additionally, define 
\begin{equation}\label{secondary minor arcs}
 \mathfrak{l}^{(\alpha)}(L; H)=[0,1)\setminus \mathfrak{L}^{(\alpha)}(L; H).   
\end{equation} It is worth emphasizing that these sets $\mathfrak{L}^{(\alpha)}(L; H)$ and $\mathfrak{l}^{(\alpha)}(L; H)$ depend on $\alpha\in \mathfrak{M}(H).$ 
With above definitions in mind, for $m,n\in \mathbb{N}$ with $1\leq m,n\leq 4,$ we define
\begin{equation}\label{def of Lmn(L)}
    \mathcal{L}^*_{m,n}(L; H):=\{\balp \in [0,1)^{4}:\ \alpha_m \in {\mathfrak M}(H), \ \alpha_n\in \mathfrak{l}^{(\alpha_m)}(L; H) \}.
\end{equation}
Define 
\begin{equation}\label{def of Ll(L;H)}
    \begin{aligned}
        \mathcal{L}_l(L;H) &=\left\{\begin{aligned}
            &\mathcal{L}_{2,3}^*(L; H)  \ \ \ \text{when}\ l=1, 3\\
            &\mathcal{L}_{3,2}^*(L; H)\ \ \ \text{when}\ l=2, 4.\\
        \end{aligned}\right.
    \end{aligned}
\end{equation}

Let $Q, H, L$, and $X$ be positive numbers with $Q \leq H\leq X^{3/2}$ and $L\leq H^{1/2}$. We shall show in the proof of Lemma \ref{improved minor} that the exponential sum $T(\boldsymbol{\alpha},\boldsymbol{\beta})$ has a sharper bound over $\boldsymbol{\alpha}\in\mathcal{M}_l(Q)\cap \mathcal{L}_{l}(L;H)$ with $1\leq l\leq 4,$ compared to the bound (\ref{basic exponential sum}). Eventually, in Lemma \ref{improved minor}, we shall have a sharper mean value estimate for $F(\boldsymbol{\alpha})$ over the set $\mathcal{M}_l(Q)\cap \mathcal{L}_{l}(L;H)$, compared to 
Lemma $\ref{basic minor}$.

Prior to this, we verify that the measure of the set on which we're forced to use the weaker estimate (\ref{basic exponential sum}) for $T$ embedded in Lemma \ref{basic minor} is indeed small. 

\begin{proposition}\label{pro5.2}
Let $H,L$ and $X$ be positive numbers with $H\leq X^{3/2}$ and $L\leq H^{1/2}$. 
  Consider the set
  \begin{equation*}
  \mathfrak{L}(L;H) :=  \{(\alpha,\beta)\in [0,1)^2:\ \alpha\in \mathfrak{M}(H),\ \beta\in \mathfrak{L}^{(\alpha)}(L;H)\}.
  \end{equation*}
  Then, the set  $\mathfrak{L}(L;H)$ is measurable and 
  \begin{equation*}
      \mes{\left(  \mathfrak{L}(L;H)\right)}\ll   L^2H^3X^{-6}.
  \end{equation*}
\end{proposition}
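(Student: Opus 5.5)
Decompose $\mathfrak{L}(L;H)$ according to the denominator $q=q_\alpha$ and then apply Fubini. For each $q\le H$, let
\[
A_q=\{\alpha\in\mathfrak{M}(H):\ q_\alpha=q\}.
\]
This set is Borel. Indeed, $\{\alpha:\|q\alpha\|\le HX^{-3}\}$ is a finite union of closed intervals, so $A_q$ is a finite Boolean combination of such sets. For $\alpha\in A_q$, the fibre $\mathfrak{L}^{(\alpha)}(L;H)=\{\beta:\ q\beta \bmod 1\in\mathfrak{M}(L;H)\}$ depends only on $q$; call it $B_q$. It is the preimage of a finite union of intervals under the map $\beta\mapsto q\beta \bmod 1$, so it is Borel as well. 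Hence
\[
\mathfrak{L}(L;H)=\bigcup_{q\le H}A_q\times B_q
\]
is a finite union of measurable rectangles. This settles measurability and gives
\[
\mes\bigl(\mathfrak{L}(L;H)\bigr)=\sum_{q\le H}\mes(A_q)\,\mes(B_q).
\]

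Next I bound the two factors. For $A_q$, note that $A_q\subseteq\{\alpha:\|q\alpha\|\le HX^{-3}\}$. That set consists of $q$ intervals, each of length $2HX^{-3}/q$, so $\mes(A_q)\le 2HX^{-3}$. Any bound uniform in $q$ suffices here, since we will sum over $q\le H$.

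For $B_q$, the map $\beta\mapsto q\beta \bmod 1$ on $[0,1)$ is $q$-to-one and preserves Lebesgue measure: the preimage of an interval of length $\ell$ has total length $\ell$. Therefore $\mes(B_q)=\mes(\mathfrak{M}(L;H))$. Bounding trivially,
\[
\mes(\mathfrak{M}(L;H))\le\sum_{r\le L}\ \sum_{0\le a\le r}\frac{2LHX^{-3}}{r}\ll\sum_{r\le L}LHX^{-3}\ll L^2HX^{-3}.
\]

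Combining these estimates,
\[
\mes\bigl(\mathfrak{L}(L;H)\bigr)\ll\sum_{q\le H}HX^{-3}\cdot L^2HX^{-3}=L^2H^3X^{-6},
\]
which is the claimed bound.

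The only point needing care is the measure-preservation of $\beta\mapsto q\beta \bmod 1$, and that $q_\alpha$ genuinely determines the fibre, so that the rectangle decomposition is valid. The hypotheses $H\le X^{3/2}$ and $L\le H^{1/2}$ are only needed to keep the arcs small (for instance, so that $LHX^{-3}/r<1/2$ and the intervals make sense as written). The estimate itself uses them in no essential way.
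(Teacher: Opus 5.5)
Your proposal is correct and follows essentially the same route as the paper: you decompose $\mathfrak{L}(L;H)$ according to $q_\alpha$, observe that the fibre depends only on $q_\alpha$ and has measure $\mes(\mathfrak{M}(L;H))\ll L^2HX^{-3}$ because $\beta\mapsto q\beta \bmod 1$ preserves Lebesgue measure, and then sum over $q\le H$. The only difference is cosmetic: you bound each $\mes(A_q)\le 2HX^{-3}$ and sum over $q\le H$, whereas the paper uses disjointness to sum the $\mes(A_n)$ to $\mes(\mathfrak{M}(H))\ll H^2X^{-3}$; your explicit finite union of measurable rectangles also justifies measurability a little more cleanly than the paper does.
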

\begin{proof}
  First, we claim that for a given $\alpha\in \mathfrak{M}(H)$, the measure of $\mathfrak{L}^{(\alpha)}(L;H)$ is bounded above by $L^2HX^{-3}.$ 
Recall the denominator $q=q_{\alpha}.$ With this, let $A \subseteq[0,q)$ be a set such that $x \in A$ if and only if
 $x\ (\text{mod}\ 1)$ is in $\mathfrak{M}(L;H).$ Observe that this set $A$ has the property that 
    $$\text{mes}(A)=q\cdot\text{mes}(\mathfrak{M}(L;H)).$$ Meanwhile, we see by the definition of $\mathfrak{L}^{(\alpha)}(L;H)$ that $\mathfrak{L}^{(\alpha)}(L;H)=\frac{1}{q}A$. Therefore, we conclude that
    \begin{equation}\label{bound for the measure of LalphaL}
    \begin{aligned}
        \text{mes}\bigl(\mathfrak{L}^{(\alpha)}(L;H)\bigr)&=\frac{1}{q}\text{mes}(A)\\
        &=\frac{1}{q}\cdot q\cdot\text{mes}(\mathfrak{M}(L;H))\\
        &=\text{mes}(\mathfrak{M}(L;H))\\
        &\leq L^2HX^{-3}.
    \end{aligned}
    \end{equation}

    Next, we shall show that  the set $ \mathfrak{L}(L;H)$ is also measurable set. Consider the decomposition
    \begin{equation*}
        \mathfrak{M}(H)=\bigcup_{\substack{1\leq n\leq H}}A_{n},
    \end{equation*}
    where 
    \begin{equation*}
        A_{n}=\{\alpha\in \mathfrak{M}(H):\ q_{\alpha}=n\}.
    \end{equation*}
 Notice that each set $A_{n}$ is a measurable set and $A_{n}$ are pairwise disjoint for all $n$.
  Therefore, we deduce that
 \begin{equation}\label{countable decomposition}
 \begin{aligned}
     &\text{mes}\left(  \mathfrak{L}(L;H)\right)\\
     &=\text{mes}\left( \bigcup_{\substack{1\leq n\leq H}} \{(\alpha,\beta)\in [0,1)^2:\ \alpha\in A_n,\ \beta\in \mathfrak{L}^{(\alpha)}(L;H)\}\right)\\
     &=\sum_{\substack{1\leq n\leq H}}\text{mes}\left(\{(\alpha,\beta)\in [0,1)^2:\ \alpha\in  A_n,\ \beta\in \mathfrak{L}^{(\alpha)}(L;H)\}\right).
 \end{aligned}
 \end{equation}
 Meanwhile, we see that the dependence of the set $\mathfrak{L}^{(\alpha)}(L;H)$ on $\alpha$ is related only to natural numbers $q_{\alpha}\in \mathbb{N}$. Hence, we find that for fixed $n\in [1,H]\cap \mathbb{N}$, the sets $\mathfrak{L}^{(\alpha)}(L;H)$ are equal for all $\alpha\in A_{n}$. Then, we infer by the bound (\ref{bound for the measure of LalphaL}) that
 \begin{equation}\label{measure bound 2}
 \begin{aligned}
 &\text{mes}\left(\{(\alpha,\beta)\in [0,1)^2:\ \alpha\in A_n,\ \beta\in \mathfrak{L}^{(\alpha)}(L;H)\}\right)\\
     &\leq L^2HX^{-3}\cdot \text{mes}(A_{n}).
 \end{aligned}
 \end{equation}
 Thus, we conclude by (\ref{countable decomposition}) and $(\ref{measure bound 2})$ that 
 \begin{equation*}
 \begin{aligned}
   \text{mes}\left(  \mathfrak{L}(L;H)\right)&=\sum_{\substack{1\leq n\leq H}}\text{mes}\left(\{(\alpha,\beta)\in [0,1)^2:\ \alpha\in  A_n,\ \beta\in \mathfrak{L}^{(\alpha)}(L;H)\}\right)\\
     &\leq  \sum_{\substack{1\leq n\leq H}}L^2HX^{-3}\cdot \text{mes}(A_{n})\\
     &\leq L^2HX^{-3}\cdot \text{mes}(\mathfrak{M}(H))\\
     &\leq L^2H^3X^{-6},
 \end{aligned}
 \end{equation*}
 which completes the proof.
\end{proof}

Let $Q$ be a positive number. Since $\mathcal{M}_l(Q)$ is measurable, we infer by Proposition \ref{pro5.2} together with the definition of $\mathcal{L}_{l}(L;H)$ that $\mathcal{M}_l(Q)\cap \mathcal{L}_{l}(L;H)$ is measurable. The following lemma provides an upper bound for the mean value of $F(\boldsymbol{\alpha})$ over the set $\mathcal{M}_l(Q)\cap \mathcal{L}_{l}(L;H)$, which is a sharper bound than that in Lemma \ref{basic minor}.

\begin{lemma}\label{improved minor}
    Let $Q, H, L$, and $X$ be positive numbers with $Q \leq H\leq X^{3/2}$ and $L\leq H^{1/2}$, and suppose that $s $ is a natural number. Then for $1\leq l\leq 4$ one has 
    \begin{equation*}\label{sharp bound for T}
    \begin{aligned}
        & \int_{\mathcal{M}_l(Q)\cap \mathcal{L}_{l}(L;H)} |F(\boldsymbol{\alpha})|^{2s}\, d\boldsymbol{\alpha}  \\
        & \ \ \ \ \ \ \ \ll X^{8} \cdot J_s(2X)\cdot \left(Q^{-1}+X^{-1}\right)\bigl(L^{-1}+X^{-1/2}\bigr)(\log X)^{4s+2}.
    \end{aligned}
    \end{equation*}
\end{lemma}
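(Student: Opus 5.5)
By Lemma \ref{reduction to T}, applied with $\fB=\mathcal{M}_l(Q)\cap\mathcal{L}_l(L;H)$, it suffices to show that
\begin{equation*}
\sup_{\substack{\balp\in \mathcal{M}_l(Q)\cap\mathcal{L}_l(L;H)\\ \bbet\in[0,1)^3}}|T(\balp,\bbet)|\ll X^8\bigl(Q^{-1}+X^{-1}\bigr)\bigl(L^{-1}+X^{-1/2}\bigr)(\log X)^2 .
\end{equation*}
I treat $l=1$, so that $\alpha_1\in\fm(Q)$, $\alpha_2\in\fM(H)$ and $\alpha_3\in\mathfrak{l}^{(\alpha_2)}(L;H)$. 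The case $l=3$ is the same argument with $h_1,h_3$ interchanged. The cases $l=2,4$ follow by the symmetry $(x,y)\leftrightarrow(y,x)$, which swaps $L_1$ with $L_2$, reverses the order of $\alpha_1,\dots,\alpha_4$, and sends $\mathcal{L}^*_{2,3}$ to $\mathcal{L}^*_{3,2}$.

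Fix $q=q_{\alpha_2}\le H$ with $\|q\alpha_2\|\le HX^{-3}$, and write $h_2=qm+r$ with $0\le r<q$ and $|m|\ll X^2/q$. For fixed $z_1,z_2$ the inner sum factors over $h_1$ and $h_3$, apart from the coupling through $h_2$. The coefficient of $h_1$ is $3z_1\alpha_1+z_2\alpha_2+\beta_1$. The coefficient of $h_3$ is $z_1\alpha_3+3z_2\alpha_4+\beta_3$. The coefficient of $h_2$ is $2z_1\alpha_2+2z_2\alpha_3+\beta_2$.

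Following the heuristic, I would substitute $h_2=qm+r$. Then
\begin{equation*}
e\bigl(-h_2(2z_1\alpha_2+2z_2\alpha_3+\beta_2)\bigr)=e(-r\,\cdot)\,e\bigl(-m(2z_1q\alpha_2+2z_2q\alpha_3+q\beta_2)\bigr).
\end{equation*}
The term $2z_1q\alpha_2$ has size $\ll X\cdot HX^{-3}$ modulo $1$, so over $|m|\ll X^2/q$ it contributes a phase of total variation $\ll X^3\cdot HX^{-3}/q\cdot q$. This is not negligible in general, so I would remove it by partial summation in $m$, at a cost of an extra factor $\ll 1+X^3\|q\alpha_2\|/q\cdot(\text{stuff})$. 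I would rather pass to $2z_2q\alpha_3$ instead: since $z_2\le X$, that is the linear-in-$z_2$ part, and $q\alpha_3$ is the quantity controlled by $\mathfrak{l}^{(\alpha_2)}(L;H)$. Summing geometric series in $h_1$ and in $m$, and bounding the $r$- and $h_3$-sums trivially by $q\cdot X^2$, gives
\begin{equation*}
T\ll qX^2\sum_{z_1,z_2\le X}\min\Bigl\{X^2,\|3z_1\alpha_1+\gamma_{z_2}\|^{-1}\Bigr\}\min\Bigl\{\frac{X^2}{q},\|2z_2q\alpha_3+\delta_{z_1}\|^{-1}\Bigr\},
\end{equation*}
where $\delta_{z_1}=2z_1q\alpha_2+q\beta_2$.

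The main obstacle is the dependence of $\delta$ on $z_1$ through $2z_1q\alpha_2$. This is what decouples the two summations, and it must be handled so that the $z_2$-sum is estimated uniformly in the shift. The standard lemma (\cite[Lemma 3.2]{MR865981}) is uniform in additive shifts, so I would first sum over $z_2$ for fixed $z_1$. Write $q\alpha_3\equiv a/r'+\theta$ with $r'$ coprime to $a$. Because $q\alpha_3\pmod 1\notin\fM(L;H)$, Dirichlet approximation with parameter $X^3/(LH)$ yields $r'>L$ or $|r'\theta|>LHX^{-3}$. In either case the sum over $z_2$ of the second minimum is
\begin{equation*}
\ll \frac{X^3}{q}\Bigl(\frac1{r'}+\frac1X+\frac{r'q}{X^3}\Bigr)\log X\ll \frac{X^3}{q}\bigl(L^{-1}+X^{-1/2}\bigr)\log X,
\end{equation*}
using $L\le H^{1/2}$ and $q\le H\le X^{3/2}$ to handle the large-$r'$ ranges. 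Here the loss $X^{-1/2}$ comes from $r'q/X^3$ with $r'q\le X^{5/2}$; I expect this bookkeeping to be the delicate point. That estimate is, however, uniform in $z_1$ only for the $z_2$-sum, and the first minimum also depends on $z_2$ through $\gamma_{z_2}$. To decouple the two sums I would apply Cauchy–Schwarz over $(z_1,z_2)$, or bound the first minimum by $\min\{X^2,\|\cdot\|^{-1}\}\le$ its sum over $z_1$ uniformly in $\gamma$. That sum is $\ll X^3(Q^{-1}+X^{-1})\log X$, uniformly in $\gamma$, by the standard lemma since $\alpha_1\in\fm(Q)$.

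Concretely, I would dyadically split the values of the second minimum, or use $ab\le$ sup-times-sum arguments, to obtain
\begin{equation*}
T\ll qX^2\cdot\frac{X^3}{q}\bigl(L^{-1}+X^{-1/2}\bigr)\cdot X^3\bigl(Q^{-1}+X^{-1}\bigr)X^{-3}\cdot(\log X)^2\cdot X,
\end{equation*}
adjusting the normalisation so that the total is $X^8(Q^{-1}+X^{-1})(L^{-1}+X^{-1/2})(\log X)^2$. Inserting this into Lemma \ref{reduction to T} then gives the lemma.
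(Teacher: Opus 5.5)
Your framework matches the paper's up to the last step: Lemma \ref{reduction to T}, complete factorisation of the $\bh$-sum, splitting $h_2$ according to $q=q_{\alpha_2}$, then $\alpha_1\in\fm(Q)$ for the $z_1$-sum and $\alpha_3\in\mathfrak{l}^{(\alpha_2)}(L;H)$ with Dirichlet at parameter $X^3/(LH)$ for the $z_2$-sum. However, the step that produces the \emph{product} of the two savings is missing.

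You keep $\delta_{z_1}=2z_1q\alpha_2+q\beta_2$ inside the second minimum. As a result both minima depend on both $z_1$ and $z_2$, so you cannot bound the double sum by $\sup_{z_2}\sum_{z_1}(\text{first})$ times $\sum_{z_2}(\text{second})$. None of your proposed substitutes works:
\begin{itemize}
\item Cauchy--Schwarz gives only
\begin{equation*}
T\ll X^8\bigl(Q^{-1}+X^{-1}\bigr)^{1/2}\bigl(L^{-1}+X^{-1/2}\bigr)^{1/2}\log X,
\end{equation*}
the geometric mean of the savings. For $Q=H/2$, $L=H^{1/7}$ this is weaker even than (\ref{basic exponential sum}).
\item Bounding the first minimum pointwise by its $z_1$-sum costs a factor $X$, which cancels the $Q$-saving. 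Your last display is $X^6(\cdots)$ as written and $X^9(\cdots)$ without the unexplained $X^{-3}$; ``adjusting the normalisation'' is not an argument.
\item Taking $\sup_{z_1}$ of the second minimum forfeits the $L$-saving.
\item A dyadic split would require counting $(z_1,z_2)$ in a genuinely two-dimensional region, which you do not attempt.
\end{itemize}

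The missing idea is that the shift hardly moves: $\|2z_1q\alpha_2\|\le 2X\|q\alpha_2\|\le 2HX^{-2}$. You correctly observed that partial summation over the full range $|m|\ll X^2/q$ costs about $H/q$. The paper makes it free by using shorter blocks. It writes $h_2=q\lfloor X^2/H\rfloor r_1+qr_2+r_3$, so that for fixed $(r_1,r_3)$ the variable $r_2$ runs over an interval of length at most $X^2/H$. On such a block $e(-2q\alpha_2z_1r_2)$ has bounded variation, so partial summation removes it at cost $O(1)$. Since there are $\ll H$ pairs $(r_1,r_3)$, one gets $S_2\ll H\min\{X^2/H,\|2q\alpha_3z_2+q\beta_2\|^{-1}\}$, independent of $z_1$.

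In your notation this is the elementary inequality
\begin{equation*}
\min\{X^2/q,\|\theta+\epsilon\|^{-1}\}\le 4(H/q)\min\{X^2/H,\|\theta\|^{-1}\}
\end{equation*}
for $|\epsilon|\le 2HX^{-2}$ and $q\le H$. It is checked by treating $\|\theta\|\le 4HX^{-2}$ and $\|\theta\|>4HX^{-2}$ separately.

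With this in hand:
\begin{itemize}
\item Sum over $z_1$ first, uniformly in $z_2$, to get $X^3(Q^{-1}+X^{-1})\log X$.
\item Then sum over $z_2$, with the minimum now capped at $X^2/H$ and $L<r'\le X^3/(LH)$, to get $X^3H^{-1}(L^{-1}+X^{-1}+HX^{-2})\log X$.
\item Multiply by $X^2\cdot H$ and use $HX^{-2}\le X^{-1/2}$ to obtain the required $X^8(Q^{-1}+X^{-1})(L^{-1}+X^{-1/2})(\log X)^2$.
\end{itemize}
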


\begin{proof}
We see by reversing the roles of $x$ and $y$ in the exponential sum $F(\boldsymbol{\alpha})$ that the bound for the cases $l=3,4$ immediately follows by the bound for the cases $l=1,2$.  Thus, it suffices to restrict our attention to the latter cases.

We shall consider the cases $l=1$ and $l=2$ separately. 
First, consider the mean value
\begin{equation*}\label{mean value l=1}
   \cI_1 := \int_{\mathcal{M}_1(H)\cap \mathcal{L}^*_{2,3}(L;H)}|F(\boldsymbol{\alpha})|^{2s}\, d\boldsymbol{\alpha}.
\end{equation*}
It follows by Lemma $\ref{reduction to T}$ with $\mathfrak{B}=\mathcal{M}_1(Q)\cap \mathcal{L}^*_{2,3}(L;H)$ that 
\begin{equation}\label{bound for mean value over M_1cap L(2,3)}
\cI_1 \ll  \, (\log X)^{4s} \, J_s(2X) \sup_{\substack{\boldsymbol{\alpha}\in \mathcal{M}_1(Q)\cap \mathcal{L}^*_{2,3}(L;H)\\ \boldsymbol{\beta}\in [0,1)^3 }} |T(\boldsymbol{\alpha},\boldsymbol{\beta})|.
\end{equation}
 Recall the definition (\ref{def of Lmn(L)}) of $\mathcal{L}^*_{2,3}(L;H)$. For a given $\alpha_2\in \mathfrak{M}(H),$ recall the definition of $q=q_{\alpha_2}$, so that $\|q\alpha_2\| \le HX^{-3}$.  With this $q$,  write  
 \begin{equation}\label{representation h2}
     h_2= q\lfloor X^{2}/H\rfloor r_1+qr_2+r_3,
 \end{equation} for some $r_1,r_2,r_3\in \Z$ with 
\begin{equation}\label{ranges of r1r2r3}
    \begin{aligned}
        -sH/q\leq &r_1\leq sH/q\\
        0\leq &r_2\leq \lfloor X^{2}/H\rfloor-1\\
        0\leq &r_3\leq q-1.
    \end{aligned}
\end{equation}
Notice here that $r_1,r_2,r_3\in \Z$ are uniquely determined for a given $h_2\in\Z$ with $|h_2|\leq sX^{2}$. 
Recall the definition (\ref{T def}) of $T(\boldsymbol{\alpha},\boldsymbol{\beta})$, and write the summation over $h_2$ in $T(\boldsymbol{\alpha},\boldsymbol{\beta})$ as three sums over $r_1,r_2$ and $r_3$ using the representation $(\ref{representation h2}).$ Then, for a given $\alpha_2\in \mathfrak{M}(H)$,  one has
\begin{equation}\label{main estimate}
    T(\balp, \bbet) \ll X^2 \sum_{1\leq z_1,z_2\leq X} \bigl| S_1(\balp, \bbet, \bz) \, S_2(\balp, \bbet, \bz) \bigr|, 
\end{equation}
where
\begin{equation}\label{S1bd}
\begin{aligned}
S_1(\balp, \bbet, \bz) &:= \sum_{|h_1|\leq sX^{2}}e(-3\alpha_1h_1z_1-\alpha_2h_1z_2-\beta_1h_1) \\
&\ll \min\left\{X^{2},\frac{1}{\|3z_1\alpha_1+z_2\alpha_2+\beta_1\|}\right\}
\end{aligned}
\end{equation}
and
\begin{equation}\label{S2bd}
\begin{aligned}
S_2(\balp, \bbet, \bz) &:= \sum_{|h_2|\leq sX^2}e(-2\alpha_2h_2z_1-2\alpha_3h_2z_2-\beta_2h_2)  \\
&\ll \sum_{r_1,r_3}\biggl|\sum_{r_2}e(-2q\alpha_2r_2z_1-2q\alpha_3r_2z_2-q\beta_2r_2)\biggr|. \end{aligned}
\end{equation}
We remark here that the range of the summation over $r_2$ in the second inequality of (\ref{S2bd}) may not be over the full interval $0\leq r_2\leq \lfloor X^{2}/H\rfloor-1$ as described in $(\ref{ranges of r1r2r3}).$ However, for given $r_1$ and $r_3$, one infers by the construction of $r_1,r_2,r_3$ that the range of $r_2$ is a subinterval of $[0,\lfloor X^{2}/H\rfloor-1]$. Nevertheless, we did not specify the range because this choice of subintervals does not harm the argument.

Furthermore, we infer that 
\begin{equation}\label{S2bd2}
    \begin{aligned}
S_2(\balp, \bbet, \bz) &\ll \sum_{r_1,r_3}\biggl|\sup_{I\subseteq \left[1,X^{2}/H\right]}\sum_{r_2\in I}e(-2q\alpha_3r_2z_2-q\beta_2r_2)\biggr| \\
&\ll H \cdot \min\left\{\frac{X^{2}}{H},\frac{1}{\|2q\alpha_3z_2+q\beta_2\|}\right\}.
\end{aligned}
\end{equation}
Here the supremum of $I$ is over subintervals in $\left[1,X^2/H\right]$, and we used summation by parts in order to eliminate the factor $-2q\alpha_2r_2z_1$, together with the fact that
\begin{equation*}
    \|2q\alpha_2z_1\|\leq 2X\|q\alpha_2\|\leq 2HX^{-2}.
\end{equation*}


Since $\alpha_1\in \mathfrak{m}(Q)$, by the same argument leading from (3.24) to (3.27) in \cite{ky2026}, together with the proof of Lemma \ref{basic minor}, one deduces that 
\begin{equation}\label{almost last}
     \sum_{1\leq z_1\leq X}\min\left\{X^{2},\frac{1}{\|3z_1\alpha_1+z_2\alpha_2+\beta_1\|}\right\}\ll X^{3}(Q^{-1}+X^{-1})\log X.
\end{equation}
Thus by summing over $z_1$ in $(\ref{main estimate}),$ it follows from (\ref{S1bd}), (\ref{S2bd}),(\ref{S2bd2}), and $(\ref{almost last})$ that for a given $\boldsymbol{\alpha}\in \mathcal{M}_1(Q)\cap \mathcal{L}^*_{2,3}(L;H)$ and $\boldsymbol{\beta}\in [0,1)^3$, one has
\begin{equation}\label{almost last bound for T}
    \begin{aligned}
&T(\boldsymbol{\alpha},\boldsymbol{\beta})\\
&\ll HX^{5}\left(Q^{-1}+X^{-1}\right)\log X\sum_{1\leq z_2\leq X}\min\left\{\frac{X^{2}}{H},\frac{1}{\|2q\alpha_3z_2+q\beta_2\|}\right\}.
    \end{aligned}
\end{equation}
Now by Dirichlet's approximation theorem, there exists $r\in \mathbb{N}$ such that $\|rq\alpha_3\|\leq LHX^{-3}$ and $r\leq X^3H^{-1}L^{-1}$. Hence, the fact, that $\alpha_3\in \mathfrak{l}^{(\alpha_2)}(L;H)$, ensures that $r>L.$
Hence by invoking \cite[Lemma 3.2]{MR865981} as in the corresponding argument leading to $(\ref{almost last})$, one has
\begin{equation}\label{extrasavin}
    \sum_{1\leq z_2\leq X}\min\left\{\frac{X^{2}}{H},\frac{1}{\|2q\alpha_3z_2+q\beta_2\|}\right\}\ll \frac{X^{3}}{H}\bigl(L^{-1}+X^{-1}+H{X^{-2}}\bigr)\log X,
\end{equation}
for a given $\alpha_3\in \mathfrak{l}^{(\alpha_2)}(L;H)$. By substituting $(\ref{extrasavin})$ into the upper bound in $(\ref{almost last bound for T})$ and recalling that $H \le X^{3/2}$, 
we see that 
\begin{equation}\label{bound1 for T(alpha,beta)}
\begin{aligned}
    \sup_{\substack{\boldsymbol{\alpha}\in \mathcal{M}_1(Q)\cap \mathcal{L}^*_{2,3}(L;H)\\ \boldsymbol{\beta}\in [0,1)^3 }} |T(\boldsymbol{\alpha},\boldsymbol{\beta})|\ll X^{8}\left(Q^{-1}+X^{-1}\right)\bigl(L^{-1}+{X^{-1/2}}\bigr)(\log X)^2.
\end{aligned}
\end{equation}
Therefore, on substituting $(\ref{bound1 for T(alpha,beta)})$ into $(\ref{bound for mean value over M_1cap L(2,3)})$, the conclusion of the lemma follows in the case $l=1$. 

Next, for the case $l=2$, we consider the mean value 
\begin{equation*}\label{mean value l>1}
    \cI_2 := \int_{\mathcal{M}_2(Q)\cap \mathcal{L}^*_{3,2}(L;H)}|F(\boldsymbol{\alpha})|^{2s}\, d\boldsymbol{\alpha}.
\end{equation*}
Since the following argument is almost the same as the case $l=1,$ we proceed quickly. It follows again by Lemma $\ref{reduction to T}$ with $\mathfrak{B}=\mathcal{M}_2(Q)\cap \mathcal{L}^*_{3,2}(L;H)$ that 
\begin{equation}\label{bound for mean value over Mlcap L(l+1,l)}
   \cI_2 \ll  \, (\log X)^{4s} \, J_s(2X) \sup_{\substack{\boldsymbol{\alpha}\in \mathcal{M}_2(Q)\cap \mathcal{L}^*_{3,2}(L;H)\\ \boldsymbol{\beta}\in [0,1)^3 }} |T(\boldsymbol{\alpha},\boldsymbol{\beta})|.
\end{equation}
Recall the definition of $\mathcal{L}^*_{3,2}(L;H)$, and for a given $\alpha_{3}\in \mathfrak{M}(H),$ recall the definition of $q:=q_{\alpha_{3}}$.  With this $q$,  we again represent $h_2$ in the form $(\ref{representation h2})$ with
the constraints (\ref{ranges of r1r2r3}).
As in the previous argument for the case $l=1$, we replace the summation over $h_{2}$ in $T(\boldsymbol{\alpha},\boldsymbol{\beta})$ by three sums over $r_1,r_2$ and $r_3$. 
Then, for a given $\alpha_{3}\in \mathfrak{M}(H)$, it follows by the same argument in (\ref{main estimate})--(\ref{S2bd2}) that
\begin{equation*}
\begin{aligned}
&T(\boldsymbol{\alpha},\boldsymbol{\beta})\\
&\ll HX^{2} \! \! \sum_{1\leq z_1,z_2\leq X} \! \! \min\left\{X^{2},\frac{1}{\|3z_1\alpha_{1}+z_2\alpha_2+\beta_{1}\|}\right\} \min\left\{\frac{X^{2}}{H},\frac{1}{\|2q\alpha_2z_1+q\beta_2\|}\right\}.
\end{aligned}
\end{equation*}

 By summing first over $z_2$ in the argument leading from $(\ref{almost last})$ to $(\ref{almost last bound for T})$, we deduce that for a given $\boldsymbol{\alpha}\in \mathcal{M}_2(Q)\cap \mathcal{L}^*_{3,2}(L;H)$ and $\boldsymbol{\beta}\in [0,1)^3$ one has
 \begin{equation*}
 \begin{aligned}
T(\boldsymbol{\alpha},\boldsymbol{\beta})\ll HX^{5}\left(Q^{-1}+X^{-1}\right)\log X\sum_{1\leq z_1\leq X}\min\left\{\frac{X^{2}}{H},\frac{1}{\|2q\alpha_2z_1+q\beta_2\|}\right\}.
 \end{aligned}
 \end{equation*}
Then by the same argument leading from $(\ref{almost last bound for T})$ to $(\ref{bound1 for T(alpha,beta)})$, we infer that
\begin{equation}\label{bound 2 for T(alpha,beta)}
\begin{aligned}
\sup_{\substack{\boldsymbol{\alpha}\in \mathcal{M}_2(Q)\cap \mathcal{L}^*_{3,2}(L;H)\\ \boldsymbol{\beta}\in [0,1)^3 }} |T(\boldsymbol{\alpha},\boldsymbol{\beta})|\ll X^{8}\left(Q^{-1}+X^{-1}\right)\bigl(L^{-1}+{X^{-1/2}}\bigr)(\log X)^2.
\end{aligned}
\end{equation}

By substituting (\ref{bound 2 for T(alpha,beta)}) into $(\ref{bound for mean value over Mlcap L(l+1,l)})$, the conclusion of the lemma follows in the case $l=2$, and this suffices to complete the proof.
\end{proof}

In the next section we perform an interpolation between the mean values in Lemmas \ref{basic minor} and \ref{improved minor} with $s=10$ and lower moments containing measure-theoretic information. The key point is that we are able to use the stronger estimate contained in Lemma \ref{improved minor} unless $(\alpha_2, \alpha_3)$ lies in a set of unusually small measure controlled via Proposition \ref{pro5.2}. As in \cite{ky2026}, the 8th moment suffices for our purposes.


\begin{lemma}\label{k-1 dim measure}
    Suppose that $\mathfrak{D}$ is a measurable set in $[0,1)^{2}$.  Then one has
    \begin{equation*}
        \int_0^1\int_{\mathfrak{D}}\int_0^1 |F(\boldsymbol{\alpha})|^{8} \, d\boldsymbol{\alpha}\ll X^{10+\varepsilon}\cdot \mes{(\mathfrak{D})},
    \end{equation*}
where $d\boldsymbol{\alpha}=d\alpha_1 \, d\alpha_2 \, d\alpha_3 \, d\alpha_4.$
\end{lemma}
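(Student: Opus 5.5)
We fix $(\alpha_2,\alpha_3)$ and bound the inner double integral over $(\alpha_1,\alpha_4)\in[0,1)^2$ uniformly by $X^{10+\varepsilon}$. Integrating this over $\mathfrak{D}$ then gives the claimed bound $X^{10+\varepsilon}\mes(\mathfrak{D})$; Fubini applies because $\mathfrak{D}$ is measurable and the integrand is bounded and continuous.

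For fixed $\alpha_2,\alpha_3$, write $F(\balp)=\sum_{x,y}c(x,y)\,e(\alpha_1x^3+\alpha_4y^3)$, where $c(x,y)=e(\alpha_2x^2y+\alpha_3xy^2)$ has modulus one. Expand $|F|^8=|F^4|^2$ and integrate over $\alpha_1,\alpha_4$ by orthogonality. This gives
\[
\int_0^1\!\!\int_0^1|F(\balp)|^8\,d\alpha_1\,d\alpha_4\le N(X),
\]
where $N(X)$ is the number of integer solutions of
\[
x_1^3+\dots+x_4^3=x_5^3+\dots+x_8^3,\qquad y_1^3+\dots+y_4^3=y_5^3+\dots+y_8^3,
\]
with $1\le \bx,\boldsymbol{y}\le X$. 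The bound comes from the triangle inequality applied to the coefficients: each term of $\sum_{\text{solutions}}\prod c\,\overline{c}$ has absolute value at most one. The key observation is that the $x$-variables and $y$-variables decouple completely. Hence $N(X)=S_4(X)^2$, where $S_4(X)$ is the number of solutions of the single equation $x_1^3+\dots+x_4^3=x_5^3+\dots+x_8^3$ with $1\le x_i\le X$.

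By Hua's lemma for cubes (the eighth moment of a Weyl sum of degree $3$), $S_4(X)=\int_0^1|\sum_{x\le X}e(\alpha x^3)|^8\,d\alpha\ll X^{5+\varepsilon}$. Therefore $N(X)\ll X^{10+\varepsilon}$ uniformly in $(\alpha_2,\alpha_3)$, and the lemma follows on integrating over $\mathfrak{D}$ and over the remaining unit intervals.

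There is no real obstacle. The only point requiring care is the orthogonality step with unimodular, $(\alpha_2,\alpha_3)$-dependent coefficients. It is handled by bounding the resulting sum over solutions in absolute value, and it needs no information about $\alpha_2,\alpha_3$. Hua's lemma is classical; if one prefers to stay within the cited literature, it also follows from the standard diagonal-cubic eighth moment estimate.
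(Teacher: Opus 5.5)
Your proof is correct: fixing $(\alpha_2,\alpha_3)$ and applying orthogonality in $(\alpha_1,\alpha_4)$, with the unimodular twists $e(\alpha_2x^2y+\alpha_3xy^2)$ bounded trivially, reduces the inner integral to the square of the number of solutions of $x_1^3+\cdots+x_4^3=x_5^3+\cdots+x_8^3$, and Hua's lemma then gives $X^{10+\varepsilon}$ uniformly in $(\alpha_2,\alpha_3)$. The paper gives no argument of its own here and simply quotes \cite[Lemma 4.1]{ky2026}; your proof is the natural self-contained argument for that estimate, and the exponent $10=2(2^3-3)$ is exactly Hua's bound squared.
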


\begin{proof}
This is \cite[Lemma 4.1]{ky2026}.
\end{proof}

\bigskip

\section{Pruning and Endgame}\label{sec3}

We now have the tools needed to prove Theorem \ref{cubic theorem}. Define
\begin{equation*}\label{F_{cj}}
    F_{c_i}(\boldsymbol{\alpha}) =\sum_{|x|,|y|\leq X}e(c_i\alpha_1x^3+c_i\alpha_2x^{2}y+ c_i\alpha_3xy^2+c_i\alpha_{4}y^3),
\end{equation*}
where $\boldsymbol{\alpha}=(\alpha_1,\alpha_2, \alpha_3,\alpha_{4})\in \mathbb{R}^{4}$. It then follows by orthogonality that
\begin{equation*}
    N_s(X)=\int_{[0,1)^{4}} \prod_{i=1}^s F_{c_i}(\boldsymbol{\alpha}) \, d\boldsymbol{\alpha}.
\end{equation*}
To set up an application of the Hardy-Littlewood method, we let $\delta = 10^{-10}$ and define the major arcs $\mathfrak{N}_{\delta}$ by
\begin{equation*}
    \mathfrak{N}_{\delta}=\bigcup_{q\leq X^{\delta}}\bigcup_{\substack{1\leq \boldsymbol{a}\leq q\\ (q,\boldsymbol{a})=1}}\mathfrak{N}_{q,\boldsymbol{a}},
\end{equation*}
where
\begin{equation*}
    \mathfrak{N}_{q,\boldsymbol{a}} =\{\boldsymbol{\alpha}\in [0,1)^{4}:\ |\boldsymbol{\alpha}-\boldsymbol{a}/q|\leq X^{\delta-3}\}.
\end{equation*}
Upon writing $\mathfrak{n}_{\delta}=[0,1)^{4}\setminus \mathfrak{N}_{\delta}$ for the minor arcs, we have
\begin{equation}\label{major and minor arcs dissection}
    N_s(X)=\int_{\mathfrak{N}_{\delta}} \prod_{i=1}^s F_{c_i}(\boldsymbol{\alpha})\, d\boldsymbol{\alpha}+\int_{\mathfrak{n}_{\delta}} \prod_{i=1}^s F_{c_i}(\boldsymbol{\alpha}) \, d\boldsymbol{\alpha}.
\end{equation}
We can determine the major arc contribution by referencing existing arguments in the literature as in \cite{ky2026}. To describe this conclusion, we first define
\begin{equation*}
S(q,\boldsymbol{a})=\sum_{x=1}^q\sum_{y=1}^qe\left(\frac{a_1x^3+a_2x^{2}y+a_3xy^{2}+a_{4}y^3}{q}\right)
\end{equation*}
and 
\begin{equation*}
    S(q)=\sum_{\substack{1\leq \boldsymbol{a}\leq q\\(q,\boldsymbol{a})=1}}\prod_{i=1}^s(q^{-2}S(q,c_i\boldsymbol{a})).
\end{equation*}
We then define the singular series $\mathfrak{S}$ by
\begin{equation*}
    \mathfrak{S}=\sum_{q=1}^{\infty}S(q).
\end{equation*}
Furthermore, on writing
\begin{equation*}
u(\boldsymbol{\gamma})=\int_{-1}^1\int_{-1}^1e(\xi^3\gamma_1+\xi^{2}\eta\gamma_2+ \xi\eta^{2}\gamma_3+\eta^3\gamma_{4}) \,d\xi \, d\eta,
\end{equation*}
we define the singular integral $\mathfrak{J}$ by
\begin{equation*}
\mathfrak{J}=\int_{\R^{4}}\prod_{i=1}^su(c_i\boldsymbol{\gamma}) \, d\boldsymbol{\gamma}.
\end{equation*}
 Then by \cite[Lemma 2.1]{ky2026} one has
 \begin{equation}\label{major}
\int_{\mathfrak{N}_{\delta}} \prod_{i=1}^sF_{c_i}(\boldsymbol{\alpha})\, d\boldsymbol{\alpha} = \mathfrak{S}\mathfrak{J}X^{2s-12}+O(X^{2s-12-\tau}),
    \end{equation}
    for some $\tau>0,$ whenever $s \ge 16$. Furthermore, it follows from \cite[Lemma 5.1]{MR1778504}
that $\mathfrak{S}\mathfrak{J} > 0$ in this instance. 

It remains to bound the contribution from the minor arcs. In the proof of the following lemma, we set up a pruning structure and describe the interpolation between restricted 8th and 20th moments of $F(\balp)$, to which we alluded in the previous section.

\begin{lemma}\label{prop3.2}
    One has
    \begin{equation*}\label{claim in lemma 3.2}
\int_{\mathfrak{n}_{\delta}}|F(\boldsymbol{\alpha})|^{18}d\boldsymbol{\alpha}\ll X^{24-\nu},
    \end{equation*}
    for some $\nu>0,$ where $d\boldsymbol{\alpha}=d\alpha_1 \, d\alpha_2 \, d\alpha_3 \, d\alpha_4.$
\end{lemma}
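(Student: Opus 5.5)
The plan is to interpolate by Hölder's inequality between restricted 20th moments, controlled by Lemmas \ref{basic minor} and \ref{improved minor} with $s=10$ and (\ref{J10}), and restricted 8th moments, controlled by Lemma \ref{k-1 dim measure}. Since $18=\tfrac56\cdot 20+\tfrac16\cdot 8$, for any measurable $\fB\subseteq[0,1)^4$ one has
\begin{equation*}
\int_{\fB}|F|^{18}\,d\balp\le\Bigl(\int_{\fB}|F|^{20}\,d\balp\Bigr)^{5/6}\Bigl(\int_{\fB}|F|^{8}\,d\balp\Bigr)^{1/6}.
\end{equation*}
By (\ref{J10}), Lemma \ref{basic minor} gives $\int_{\cM_l(Q)}|F|^{20}\ll X^{28+\varepsilon}(Q^{-1}+X^{-1})$. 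The trivial case $\mathfrak D=[0,1)^2$ of Lemma \ref{k-1 dim measure} gives $\int|F|^8\ll X^{10+\varepsilon}$. Together these yield $X^{25+\varepsilon}Q^{-5/6}$. This already suffices whenever some $\alpha_l\in\fm(X^{6/5+\eta})$ for a fixed small $\eta>0$, since the resulting bound is $X^{24-5\eta/6+\varepsilon}$.

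For the remaining $\balp\in\mathfrak n_\delta$, I would prune dyadically. For each $j$, let $Q_j$ be the least dyadic $Q$ with $\alpha_j\in\fM(Q)$. Let $Q=\max_j Q_j$, and let $l$ be an index attaining it. Then $\alpha_l\in\fm(Q/2)$ and every $\alpha_j\in\fM(Q)$, with $Q\le X^{6/5+\eta}$. If $Q<X^{\delta/5}$, then taking the product of the four denominators as a common denominator $q\le Q^4\le X^{\delta}$ puts $\balp$ in $\mathfrak N_\delta$, with $|\alpha_j-a_j/q|\le Q^5X^{-3}$ after reducing. Hence on $\mathfrak n_\delta$ we have $Q\ge X^{\delta/5}$, and there are only $O(\log X)$ dyadic values of $Q$ and four choices of $l$.

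Fix such $Q$ and $l$, and set $H=Q$ and $L=Q^{1/2}$, so that $L\le H^{1/2}$ and $H\le X^{3/2}$. The pair $(\alpha_2,\alpha_3)$ lies in $\fM(H)^2$, and I split it according to the definition (\ref{def of Ll(L;H)}) of $\mathcal L_l(L;H)$.
\begin{itemize}
\item[(i)] Suppose $\balp\in\cM_l(Q/2)\cap\mathcal L_l(L;H)$. Lemma \ref{improved minor} gives a 20th moment $\ll X^{28+\varepsilon}Q^{-1}(L^{-1}+X^{-1/2})$. Applying Lemma \ref{k-1 dim measure} with $\mathfrak D=\fM(H)^2$, whose measure is $\ll H^4X^{-6}$, the 8th moment is $\ll X^{4+\varepsilon}Q^4$. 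The product is $X^{24+\varepsilon}Q^{-1/6}(L^{-1}+X^{-1/2})^{5/6}$, which is at most $X^{24+\varepsilon}Q^{-1/6}\le X^{24-\delta/30+\varepsilon}$.
\item[(ii)] Suppose $(\alpha_m,\alpha_n)$, which is $(\alpha_2,\alpha_3)$ or $(\alpha_3,\alpha_2)$, lies in $\mathfrak L(L;H)$. Proposition \ref{pro5.2} gives measure $\ll L^2H^3X^{-6}$; Lemma \ref{k-1 dim measure} is symmetric in the two middle coordinates, so it applies in either order. The 8th moment is then $\ll X^{4+\varepsilon}L^2Q^3$. Combined with the basic 20th moment $X^{28+\varepsilon}Q^{-1}$, this gives $X^{24+\varepsilon}Q^{-1/3}L^{1/3}=X^{24+\varepsilon}Q^{-1/6}$, again a power saving.
\end{itemize}
Summing over the $O(\log X)$ pieces, and taking $\varepsilon$ small relative to $\delta$ and $\eta$, gives the lemma.

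The main obstacle is making the pruning bookkeeping exact. First, the sets in Lemma \ref{improved minor} are defined through $q_{\alpha_m}$ with $H$ and $\fM(H)$ fixed, so I must check that choosing $H=Q$ dyadically is consistent with $\alpha_m\in\fM(H)$ and with the hypothesis $Q\le H$; using $\cM_l(Q/2)$ and $H=Q$ handles this up to constants. Second, I must verify that the cases genuinely cover $\mathfrak n_\delta$. I would check the passage from "all $\alpha_j\in\fM(Q)$ with $Q<X^{\delta/5}$" to membership in $\mathfrak N_\delta$ carefully, possibly replacing $\delta/5$ by a smaller exponent such as $\delta/10$ so that $Q^5X^{-3}\le X^{\delta-3}$.
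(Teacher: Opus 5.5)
\textbf{There is a genuine gap.} Your architecture matches the paper's: dyadic pruning by the coordinate of largest height, a split according to $\mathcal{L}_l(L;H)$, and H\"older between 20th and 8th moments. On one piece you use Lemma \ref{improved minor} with $\mathfrak{D}=\fM(H)^2$, and on the other Lemma \ref{basic minor} with Proposition \ref{pro5.2}. The gap is that your first step is false, so the range it was meant to cover is left uncovered.

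\textbf{The first step.} Lemma \ref{basic minor} with (\ref{J10}) gives $X^{28+\varepsilon}(Q^{-1}+X^{-1})$, not $X^{28+\varepsilon}Q^{-1}$. For $Q=X^{6/5+\eta}>X$ the factor is $X^{-1}$. H\"older with the full 8th moment $X^{10+\varepsilon}$ then gives $X^{25+\varepsilon}(Q^{-1}+X^{-1})^{5/6}\ge X^{24+1/6}$, which is no saving. Since $Q^{-1}+X^{-1}\ge X^{-1}$ for every $Q$, the unrestricted 8th moment is useless at every level. The measure saving in the 8th moment (from $\fM(H)^2$ or from $\mathfrak{L}(L;H)$) is needed throughout, including the range $X^{6/5+\eta}<Q\le X^{3/2}$ you tried to discard.

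\textbf{Why cases (i) and (ii) don't cover the top range.} You dropped the $X^{-1}$ term in both cases as well. For $Q\le X^{6/5+\eta}$ this is harmless, but it hides why $L=Q^{1/2}$ cannot be pushed higher. For $Q>X$, case (ii) actually gives $X^{24+\varepsilon}X^{-5/6}Q^{1/2}L^{1/3}=X^{24-5/6+\varepsilon}Q^{2/3}$. This exceeds $X^{24}$ once $Q>X^{5/4}$.

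\textbf{The repair.} Drop the first step and let the pruning run over the whole range $X^{\delta/5}\le Q\le X^{3/2}$, as the paper does. Then choose $L$ much smaller. At $H=X^{3/2}$, case (i) needs $L>X^{1/5}$ and case (ii) needs $L<X^{1/4}$; writing $L=H^\theta$, this forces $2/15<\theta<1/6$. The paper takes $L=H^{1/7}$. The two pieces are then bounded by $X^{24+\varepsilon}\bigl((H^{-1}+X^{-1})L^{-1}\bigr)^{5/6}H^{2/3}$ and $X^{24+\varepsilon}(H^{-1}+X^{-1})^{5/6}(H^3L^2)^{1/6}$. Both are $\ll X^{24+\varepsilon}(H^{-2/7}+X^{-1/84})$, which is a power saving since $H\ge X^{\delta/100}$.

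This balancing of $L$ against the top of the range is the key point your proposal is missing. Your remaining bookkeeping (the choice of $l$, $\cM_l(Q/2)$ with $H=Q$, and the inclusion of low heights in $\mathfrak{N}_\delta$) is fine up to constants.
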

\begin{proof}
 Recall the definition of $\mathfrak{M}(H)$. We observe that for any $H\leq X^{\delta/100},$ one has
\begin{equation}\label{inclusion}
    \mathfrak{M}(H)^4\subseteq \mathfrak{N}_{\delta}.
\end{equation}
Observe that for $H>0,$ one has
\begin{equation}\label{pruning argument crucial}
\begin{aligned}
   \mathfrak{M}(H)^4 \setminus \mathfrak{M}(H/2)^4
   =\mathfrak{P}_1(H)\cup \mathfrak{P}_2(H)\cup\mathfrak{P}_3(H)\cup \mathfrak{P}_4(H),
\end{aligned}
\end{equation}
where 
\begin{equation*}
    \begin{aligned}
        \mathfrak{P}_1(H)&=\mathfrak{M}(H)^4\setminus(\mathfrak{M}(H/2)\times\mathfrak{M}(H)^3)\\
 \mathfrak{P}_2(H)&=  (\mathfrak{M}(H/2)\times\mathfrak{M}(H)^3)\setminus (\mathfrak{M}(H/2)^2\times\mathfrak{M}(H)^2)\\
  \mathfrak{P}_3(H)&= (\mathfrak{M}(H/2)^2\times\mathfrak{M}(H)^2)\setminus (\mathfrak{M}(H/2)^3\times\mathfrak{M}(H))\\
  \mathfrak{P}_4(H)&=  (\mathfrak{M}(H/2)^3\times\mathfrak{M}(H))\setminus \mathfrak{M}(H/2)^4.\end{aligned}
\end{equation*}
It suffices to show that for $1\leq l\leq 4$ and for all $X^{\delta/100}\leq H\leq X^{3/2}$ with $\delta=10^{-10}$, one has
\begin{equation}\label{claim}
\int_{ \mathfrak{P}_l(H)}|F(\boldsymbol{\alpha})|^{18} \, d\boldsymbol{\alpha}\ll X^{24-\gamma}\ \text{for some}\ \gamma>0.
\end{equation}
In fact, note that by (\ref{inclusion}) one has
\begin{equation}\label{fdsa}
\begin{aligned}
    \mathfrak{n}_{\delta}&\subseteq \mathfrak{M}(X^{3/2})^4\setminus \mathfrak{M}(X^{\delta/100})^4\\
    &\subseteq\bigcup_{j=0}^{\rm L}(\mathfrak{M}(2^{-j}X^{3/2})^4\setminus \mathfrak{M}(2^{-j-1}X^{3/2})^4), 
\end{aligned}
\end{equation}
for some ${\rm L}=O(\log X).$
Then, it follows by $(\ref{pruning argument crucial})$ that 
\begin{equation}\label{asdf1}
\begin{aligned}
    \mathfrak{n}_{\delta}&\subseteq \bigcup_{j=0}^{\rm L}\bigl(\mathfrak{P}_1(2^{-j}X^{3/2})\cup \mathfrak{P}_2(2^{-j}X^{3/2})\cup\mathfrak{P}_3(2^{-j}X^{3/2})\cup \mathfrak{P}_4(2^{-j}X^{3/2})\bigl).
\end{aligned}
\end{equation}
Then, assuming the truth of $(\ref{claim})$, we infer from $(\ref{asdf1})$  that 
\begin{equation*}\label{reasoning}
\begin{aligned}
\int_{\mathfrak{n}_{\delta}}|F(\boldsymbol{\alpha})|^{18}\, d\boldsymbol{\alpha}&\ll \sum_{j=0}^{\rm L}\sum_{l=1}^4\int_{ \mathfrak{P}_l(2^{-j}X^{3/2})}|F(\boldsymbol{\alpha})|^{18} \, d\boldsymbol{\alpha} \ll X^{24-\nu},
\end{aligned}
\end{equation*}
for some $\nu > 0$, as required.
Hence, we turn to verifying $(\ref{claim}).$
Recall the definition (\ref{def of Ll(L;H)}) of $ \mathcal{L}_{l}(L;H)$ with $1\leq l\leq 4.$ We denote $[0,1)^4\setminus \mathcal{L}_{l}(L;H)$ by $(\mathcal{L}_{l}(L;H))^c$, and we set $L=H^{1/7}$. 
Then, we see that
\begin{equation}\label{new dissection}
\begin{aligned}
&\int_{\mathfrak{P}_l(H)}|F(\boldsymbol{\alpha})|^{18} \, d\balp
\\
&=\int_{\mathfrak{P}_l(H)\cap \mathcal{L}_{l}(L;H)}|F(\boldsymbol{\alpha})|^{18}\, d\balp + \int_{\mathfrak{P}_l(H)\cap (\mathcal{L}_{l}(L;H))^c}|F(\boldsymbol{\alpha})|^{18}\, d\balp.
\end{aligned}
\end{equation}
We first consider the case $l=1.$ 
Let us estimate the first term on the right hand side of $(\ref{new dissection})$. By H\"older's inequality, one has
\begin{equation}\label{mean value over the secondary minor arcs}
\begin{aligned} 
I_1 := &\int_{\mathfrak{P}_1(H)\cap \mathcal{L}_{1}(L;H)}|F(\boldsymbol{\alpha})|^{18}\, d\balp \\
    & \le  \left(\int_{\mathfrak{P}_1(H)\cap \mathcal{L}_{1}(L;H)}|F(\boldsymbol{\alpha})|^{20}\, d\balp\right)^{5/6}\left(\int_{\mathfrak{P}_1(H)\cap \mathcal{L}_{1}(L;H)}|F(\boldsymbol{\alpha})|^{8}\, d\balp\right)^{1/6}.
\end{aligned}
\end{equation}
By the definitions of $\mathcal{M}_1(Q)$ and $\mathcal{L}^*_{2,3}(L;H)$, it follows that
\begin{equation}\label{mean value over the secondary minor arcs2}
    \begin{aligned}
     & I_1 \le \left(\int_{\mathcal{M}_1(H/2)\cap  \mathcal{L}_{1}(L;H)}|F(\boldsymbol{\alpha})|^{20}\, d\balp\right)^{5/6}\left(\int_0^1\int_{\mathfrak{M}(H)^2}\int_0^1|F(\boldsymbol{\alpha})|^{8}\, d\balp\right)^{1/6}.
    \end{aligned}
\end{equation}
Now by applying Lemma \ref{improved minor} with $l=1$ and $Q=H/2$, together with (\ref{J10}) and Lemma \ref{k-1 dim measure}, we deduce that
\begin{equation}\label{final bound 1}
    \begin{aligned}
         I_1  &\ll \bigl(X^{8+\varepsilon}J_{10}(2X)\cdot \left(H^{-1}+X^{-1}\right)\cdot L^{-1}\bigr)\bigr)^{5/6}\cdot (X^{10+\varepsilon}\cdot \text{mes}(\mathfrak{M}(H)^2)^{1/6}\\
   & \ll X^{24+\varepsilon}\left(\left(H^{-1}+X^{-1}\right)\cdot L^{-1}\right)^{5/6}\cdot H^{2/3},
    \end{aligned}
\end{equation}
where we have used the fact that $L^{-1}\geq X^{-1/2}$ since $L=H^{1/7}$ and $H\leq X^{3/2}.$

Let us turn to estimating the second term on the right hand side of (\ref{new dissection}). By H\"older's inequality, we have
\begin{equation}\label{mean value over small measure}
\begin{aligned}
    I_2 :=&\int_{\mathfrak{P}_1(H)\cap (\mathcal{L}_{1}(L;H))^c}|F(\boldsymbol{\alpha})|^{18}\, d\balp \\
    & \le  \left(\int_{\mathfrak{P}_1(H)\cap (\mathcal{L}_{1}(L;H))^c}|F(\boldsymbol{\alpha})|^{20}\, d\balp\right)^{5/6}\left(\int_{\mathfrak{P}_1(H)\cap (\mathcal{L}_{1}(L;H))^c}|F(\boldsymbol{\alpha})|^{8}\, d\balp\right)^{1/6}.
\end{aligned}
\end{equation}
Note that 
\begin{equation}\label{P1capL23(L) inclusion}
\begin{aligned}
    \mathfrak{P}_1(H)\cap (\mathcal{L}_{1}(L;H))^c\subseteq  \mathfrak{P}_1(H)\subseteq \mathfrak{m}(H/2)\times [0,1)^3,
\end{aligned}
\end{equation}
by the definition of $\mathfrak{P}_1(H).$ Furthermore,  since \begin{equation*}
    \mathfrak{P}_1(H)\subseteq [0,1)\times \mathfrak{M}(H)\times [0,1)^2
\end{equation*}
and
\begin{equation*}
\begin{aligned}
  &\left([0,1)\times \mathfrak{M}(H)\times [0,1)^2\right)\cap (\mathcal{L}_{1}(L;H))^c \\&=\{\boldsymbol{\alpha}\in [0,1)^4:\ \alpha_2\in \mathfrak{M}(H),\ \alpha_3\in \mathfrak{L}^{(\alpha_2)}(L;H)\},
\end{aligned}
\end{equation*}
we note that
\begin{equation}\label{P1capL23(L)c inclusion}
\begin{aligned}
   &\mathfrak{P}_1(H)\cap (\mathcal{L}_{1}(L;H))^c\\
   &\subseteq \left([0,1)\times \mathfrak{M}(H)\times [0,1)^2\right)\cap (\mathcal{L}_{1}(L;H))^c\\
   &\subseteq [0,1)\times \mathfrak{L}(L;H)\times [0,1),
\end{aligned}
\end{equation}
where $ \mathfrak{L}^{}(L;H)$ is defined in Proposition $\ref{pro5.2}$. Therefore,  it follows by $(\ref{mean value over small measure})$, (\ref{P1capL23(L) inclusion}), and $(\ref{P1capL23(L)c inclusion})$, and  that
\begin{equation}\label{final bound 2}
\begin{aligned}
   I_2  &\leq \left(\int_{\mathfrak{m}(H/2)\times [0,1)^3}|F(\boldsymbol{\alpha})|^{20}\, d\balp \right)^{5/6} \left(\int_0^1\int_{\mathfrak{L}^{}(L;H)}\int_0^1|F(\boldsymbol{\alpha})|^{8}\, d\balp \right)^{1/6}\\
    &\ll ((X^{8+\varepsilon}J_{10}(2X)\cdot\left(H^{-1}+X^{-1}\right))^{5/6}\cdot (X^{10+\varepsilon}\cdot \text{mes}(\mathfrak{L}^{}(L;H)))^{1/6}\\
   & \ll X^{24+\varepsilon}\left(H^{-1}+X^{-1}\right)^{5/6}\cdot (H^{3}L^2)^{1/6},
\end{aligned}
\end{equation}
where we have used Lemma $\ref{basic minor}$, Lemma  \ref{k-1 dim measure}, and Proposition $\ref{pro5.2}.$

Therefore, we conclude from $(\ref{new dissection})$ together with (\ref{final bound 1}) and $(\ref{final bound 2})$ that 
\begin{equation}\label{final bound 6}
\begin{aligned}
&\int_{\mathfrak{P}_1(H)}|F(\boldsymbol{\alpha})|^{18}\, d\balp \ll X^{24+\varepsilon}(\Xi_1+\Xi_2),
\end{aligned}
\end{equation}
where 
$$\Xi_1=\left(\left(H^{-1}+X^{-1}\right)\cdot L^{-1}\right)^{5/6}\cdot H^{2/3}$$
and
$$\Xi_2=\left(H^{-1}+X^{-1}\right)^{5/6}\cdot (H^{3}L^2)^{1/6}.$$
On recalling that $L=H^{1/7}$ and $X^{\delta/100}\leq H\leq X^{3/2},$ one infers that 
\begin{equation}\label{Xibd}
\Xi_i \ll H^{-2/7} + X^{-5/6}H^{23/42} \ll H^{-2/7} + X^{-1/84} \qquad (i=1,2).
\end{equation}
It follows that
\begin{equation}\label{final bound 8}
\int_{\mathfrak{P}_1(H)}|F(\boldsymbol{\alpha})|^{18}\, d\balp \ll X^{24-\gamma},
\end{equation}
for some $\gamma>0$, which completes the case $l=1.$

Next, we consider the case $l=2$. Since the steps are almost the same as in the case $l=1,$ we proceed quickly.  
By following the argument leading from (\ref{mean value over the secondary minor arcs}) to $(\ref{mean value over the secondary minor arcs2})$, we infer that
\begin{equation*}
\begin{aligned}
 I_3 :=&  \int_{\mathfrak{P}_2(H)\cap \mathcal{L}_{2}(L;H)}|F(\boldsymbol{\alpha})|^{18}\, d\balp \\ &\le \left(\int_{\mathcal{M}_2(H/2)\cap \mathcal{L}_{2}(L;H)}|F(\boldsymbol{\alpha})|^{20}\, d\balp \right)^{5/6} \left(\int_0^1 \int_{\mathfrak{M}(H)^2} \int_0^1 |F(\boldsymbol{\alpha})|^{8}\, d\balp\right)^{1/6}.
\end{aligned}
\end{equation*}
By Lemma \ref{improved minor} with $l=2$ and $Q=H/2$, along with (\ref{J10}) and Lemma \ref{k-1 dim measure}, we find just as in (\ref{final bound 1}) that
\begin{equation}\label{final bound 3}
    \begin{aligned}
   I_3 & \ll X^{24+\varepsilon}\left(\left(H^{-1}+X^{-1}\right)\cdot L^{-1}\right)^{5/6}\cdot H^{2/3}.
    \end{aligned}
\end{equation}

Likewise, by following the argument leading from $(\ref{mean value over small measure})$ to $(\ref{final bound 2})$, one infers that 
\begin{equation*}\label{final bound 4}
    \begin{aligned}
  I_4 :=  &\int_{\mathfrak{P}_2(H)\cap (\mathcal{L}_{2}(L;H))^c}|F(\boldsymbol{\alpha})|^{18}\, d\balp \\
     &\leq \left(\int_{[0,1)\times\mathfrak{m}(H/2)\times [0,1)^2}|F(\boldsymbol{\alpha})|^{20}\, d\balp \right)^{5/6} \left(\int_0^1\int_{\mathfrak{L}(L;H)}\int_0^1|F(\boldsymbol{\alpha})|^{8}\, d\balp \right)^{1/6}.
    \end{aligned}
\end{equation*}
Therefore, by applying Lemma $\ref{basic minor}$, Lemma  \ref{k-1 dim measure}, and Proposition $\ref{pro5.2}$ just as in (\ref{final bound 2}), we obtain
\begin{equation}\label{final bound 5}
    \begin{aligned}
  I_4 &\ll X^{24+\varepsilon}\left(H^{-1}+X^{-1}\right)^{5/6}\cdot (H^{3}L^2)^{1/6}.
    \end{aligned}
\end{equation}

Thus, by substituting (\ref{final bound 3}) and $(\ref{final bound 5})$ into the right hand side of $(\ref{new dissection})$, one has
\begin{equation*}
\begin{aligned}
&\int_{\mathfrak{P}_2(H)}|F(\boldsymbol{\alpha})|^{18}\, d\balp \ll X^{24+\varepsilon}(\Xi_1+\Xi_2),
\end{aligned}
\end{equation*}
with $\Xi_1$ and $\Xi_2$ as defined in $(\ref{final bound 6}).$
On recalling (\ref{Xibd}), one sees that
\begin{equation}\label{final bound 7}
\int_{\mathfrak{P}_2(H)}|F(\boldsymbol{\alpha})|^{18}\, d\boldsymbol{\alpha} \ll X^{24-\gamma},
\end{equation}
for some $\gamma>0$, which completes the case $l=2.$ 

It is easily seen that the verifcation $(\ref{claim})$ for the cases $l=3$ and $l=4$ follows a nearly identical pattern. Hence the proof of the lemma is completed by reference to $(\ref{final bound 8})$ and $(\ref{final bound 7})$.
\end{proof}

With Lemma \ref{prop3.2} in hand, it now follows exactly as in the proof of \cite[Lemma 2.2]{ky2026} that whenever $s \ge 18$ one has
\begin{equation}\label{minor}
    \int_{\mathfrak{n}_{\delta}} \prod_{i=1}^s F_{c_i}(\boldsymbol{\alpha}) \, d\boldsymbol{\alpha}\ll X^{2s-12-\eta}
\end{equation}
for some $\eta>0.$ Therefore, by substituting (\ref{major}) and (\ref{minor}) into $(\ref{major and minor arcs dissection})$, we complete the proof of Theorem \ref{cubic theorem}.


\section*{acknowledgement}
The authors would like to express sincere gratitude to Trevor Wooley for suggesting this problem. The second author also gratefully acknowledges support from the KAP allocation at the University of California, Davis. The first author thanks the University of California, Davis for supporting a visit that facilitated the completion  of this work. Finally, the authors thank the referees for useful comments.

\bibliographystyle{alpha}
\bibliography{reference}

\end{document}